\newtheorem{thm}{Theorem}[section]
\newtheorem{prop}[thm]{Proposition}
\newtheorem{lem}[thm]{Lemma}
\newtheorem{cor}[thm]{Corollary}
\newtheorem*{thm*}{Theorem}
\theoremstyle{definition}
\newtheorem{conj}{Conjecture}
\newtheorem{question}{Question}
\newtheorem{rem}[thm]{Remark}
\newtheorem*{defn*}{Definition}
\newtheorem*{rem*}{Remark}
\newcounter{nmdthmcnt}
\newcommand{\G}{{\Gamma}}
\renewcommand{\a}{{\alpha}}
\renewcommand{\L}{{\Lambda}}
\DeclareMathOperator{\az}{alph}
\DeclareMathOperator{\genus}{genus}
\newcommand{\factor}[2]{{\raise0.7ex\hbox{$#1$} \!\mathord{\left/ {\vphantom {#1 {#2}}}\right.\kern-\nulldelimiterspace}\!\lower0.7ex\hbox{${#2}$}}}
\newcommand{\GG}{\ensuremath{\mathbb{G}}}
\newcommand{\FF}{\ensuremath{\mathbb{F}}}
\newcommand{\NN}{\ensuremath{\mathbb{N}}}
\newcommand{\maps}{\rightarrow}
\begin{document}
\begin{frontmatter}
\title{Embedddings between partially commutative groups: 
two counterexamples\tnoteref{t1}}
\author[mio]{Montserrat Casals-Ruiz\fnref{fn1,fn2}
}
\ead{montsecasals@gmail.com}

\author[ncl]{Andrew Duncan\fnref{fn3}}
\ead{andrew.duncan@ncl.ac.uk}

\author[mio]{Ilya Kazachkov\fnref{fn2,fn4}}
\ead{ilya.kazachkov@gmail.com}

\address[mio]{Mathematical Institute, University of Oxford, 24-29 St. Giles', Oxford, OX1 3LB, UK}
\address[ncl]{School of Mathematics and Statistics, Newcastle University, Newcastle upon Tyne, NE1 7RU, United Kingdom}
\tnotetext[t1]{The authors are grateful to the referee for careful reading of,  and 
several improvements to, the text.} 

\fntext[fn1]{Supported by a Marie Curie International Incoming Fellowship within the 7th European Community Framework Programme.}

\fntext[fn2]{Supported by the Spanish Government, grant MTM2011-28229-C02-02, partly with FEDER funds.}

\fntext[fn3]{Partially supported by EPSRC grant EP/F014945.}

\fntext[fn4]{Supported by the Royal Commission's 1851 Research Fellowship.}

\begin{abstract}
In this note we give two examples of partially commutative subgroups of partially commutative groups. Our examples are counterexamples to the Extension Graph Conjecture and to the Weakly Chordal Conjecture of Kim and Koberda, \cite{KK}. On the other hand we extend the class of partially commutative
groups for which it is known that the Extension Graph Conjecture holds, to
include those with commutation graph containing no induced $C_4$ or $P_3$. 
In the process, some new embeddings of surface groups into partially 
commutative groups emerge. 
\end{abstract}
\begin{keyword}
\MSC{20F05, 20F36, 20E07} \sep partially commutative group \sep right-angled Artin group 

\end{keyword}

\end{frontmatter}

\section{Introduction}

Partially commutative groups are a class of groups widely studied on account of their simple definition, their intrinsically rich structure and their natural appearance in several branches of computer science and mathematics. Crucial examples, which shape the theory of presentations of
groups, arise from study of their subgroups: notably Bestvina and Brady's example of a group which is homologically finite (of type $FP$) but
not geometrically finite (in fact not of type $F_2$); and Mihailova's example of a group with unsolvable subgroup membership problem. More recently results of Wise and others have lead to Agol's proof of the virtual Haken conjecture: that every hyperbolic Haken $3$-manifold
is virtually fibred.  An essential  step in the argument uses the result that the fundamental groups of  so-called ``special'' cube complexes 
embed into partially commutative groups. (It turns out that the manifolds in question have finite covers which are special.)  In the light of such results it is natural to ask which groups arise as subgroups of partially commutative groups, and, in particular, when one partially commutative group embeds in another. 

Let $\Gamma$ be a finite (undirected) simple graph, with vertex set $X$ and edge set $E$,  and let $F(X)$ be the free group on $X$. For elements $g,h$ of a group denote the commutator $g^{-1}h^{-1}gh$ of $g$ and $h$ by $[g,h]$. 
Let
\[
R=\{[x,y]\in F(X) \mid  x,y\in X \textrm{ and }\{x,y\}\in E\}.
\]
Then the {\em partially commutative (pc) group with  commutation graph } $\Gamma$ is the group $\GG(\Gamma)$ with presentation $\left< X\mid R\right>$. 
The complement of a  graph $\Gamma$ is the graph $\overline{\Gamma}$ with the same vertex
set as $\Gamma$ and an edge joining vertices $u$ and $v$ if and only 
if there is no such edge in $\Gamma$. The partially commutative 
group $\GG(\Gamma)$ is said to have \emph{non-commutation  graph} $\overline{\Gamma}$.

Note that the graph $\Gamma$ uniquely determines the partially commutative group up to isomorphism; that is two pc groups $\GG(\Gamma)$ and $\GG(\Lambda)$ are isomorphic if and only if 
$\Gamma$ is isomorphic to $\Lambda$, see \cite{D87.3}. Since isomorphism of pc groups can be characterised in terms of their defining graphs is natural to ask for an analogous characterisation of embedding between pc groups.

\begin{question}[Problem 1.4 in \cite{CSS}]\label{quest1}
Is  there a natural graph theoretic condition which determines
 when  one  partially commutative group embeds in another?
\end{question}

If $\L$ and $\G$ are graphs
such that  $\L$ is isomorphic to a full subgraph of $\G$,
then $\L$ is said to be an \emph{induced subgraph} of 
$\G$ and we write $\L\le \G$.  If $\L$ is not an induced subgraph of $\G$ then $\G$ is said to be $\L$\emph{-free}.
It is easy to see that if $\Lambda$ is an induced subgraph of $\G$ then $\GG(\Lambda)\le \GG(\Gamma)$. 
However, unless $\G$ is a complete graph, $\GG(\G)$ will contain subgroups which
do not correspond to induced subgraphs in  this way.  Moreover, in general pc
groups contain subgroups which are not pc groups. In fact 
Droms proved \cite{D} that  every subgroup of a partially commutative group $\GG(\Gamma)$ is partially commutative if and only if $\Gamma$ is both $C_4$ 
and  $P_3$ free, where $C_n$ is the cycle graph on $n$ vertices 
and $P_n$ is the path graph with $n$ edges. 

Significant progress towards answering  Question \ref{quest1} has been made by Kim and Koberda \cite{KK}, exploiting  
the notion of the  extension of a graph. 
The \emph{extension graph} $\Gamma^e$ of a graph $\Gamma$ is the
graph with vertex set $V^e=\{g^{-1}xg\in \GG(\Gamma)\,|\, x\in V(\Gamma), g\in \GG(\Gamma)\}$ and an edge joining $u$ to $v$ if and only if $[u,v]=1$ in $\GG(\Gamma)$. 
In \cite{KK} it is shown that if $\L$ is a subgraph of $\G^e$ 
then $\GG(\L)$ embeds in $\GG(\G)$. However, the converse is shown to hold only 
 if $\G$ is triangle-free ($C_3$ free): in which case  if   
$\GG(\L)$ embeds in $\GG(\G)$, then $\L$ 
is a subgraph of $\G^e$ (\cite[Theorem 10]{KK}).  This suggests the
following conjecture. 

\begin{conj}
{The Extension Graph Conjecture}, %
{\cite[Conjecture 4]{KK}}%
]
\label{conj:egc}
Let $\Gamma_1$ and $\Gamma_2$ be finite graphs. Then  $\GG(\Gamma_1)< \GG(\Gamma_2)$ if and only if $\Gamma_1< \Gamma_2^e$.
\end{conj}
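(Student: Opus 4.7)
The plan is to establish the non-trivial direction: given an embedding $\phi\colon\GG(\Gamma_1)\hookrightarrow\GG(\Gamma_2)$, to produce an induced copy of $\Gamma_1$ inside $\Gamma_2^e$. The reverse implication is already in \cite{KK}, and the triangle-free case of the direction I aim at is settled there, so the target is to remove the triangle-free hypothesis on $\Gamma_2$.

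The natural strategy is to ``straighten'' $\phi$, after possibly post-composing with an automorphism of $\GG(\Gamma_2)$, so that every vertex $v\in V(\Gamma_1)$ is sent to an element of $V^e$, i.e.\ to a conjugate of some vertex generator of $\GG(\Gamma_2)$. Once achieved, injectivity of $\phi$ and the fact that commuting elements of $\GG(\Gamma_1)$ must commute in $\GG(\Gamma_2)$ force the images of $V(\Gamma_1)$ to span an induced copy of $\Gamma_1$ inside $\Gamma_2^e$.

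To carry out the straightening, I would invoke the centraliser theory for pc groups due to Servatius and Duchamp--Krob, which identifies the centraliser of a cyclically reduced root element with a direct product of a free abelian group supported on its link and a pc group on the orthogonal complement. For each generator $v$, after conjugation one writes $\phi(v)=w^n$ for a root $w$, and the commutation constraints coming from edges of $\Gamma_1$ organise the roots into structured commuting subsystems. One would then attempt an induction on the number of vertices of $\Gamma_2$, or on a combinatorial invariant such as the clique number, exploiting visual splittings of $\GG(\Gamma_2)$ over joins of cliques and the action on the associated Bass--Serre trees to detect when the image of a generator fails to be conjugable into a single vertex subgroup, and to extract a contradiction in that case from the commutation relations inherited from $\Gamma_1$.

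The decisive obstacle, which I expect to defeat this programme, is the presence of ``hidden'' non-visual subgroups inside $\GG(\Gamma_2)$ whenever $\Gamma_2$ contains induced cycles or sufficiently long induced paths; surface subgroups are the prototypical examples. Such subgroups produce pairs of commuting elements that cannot be simultaneously conjugated into vertex generators, so the commutation pattern of a subgroup isomorphic to some $\GG(\Gamma_1)$ need not be realised by any induced subgraph of $\Gamma_2^e$. In view of the abstract's announcement of counterexamples, a successful version of the above straightening argument would require either a restrictive hypothesis on $\Gamma_2$ (e.g.\ the $C_4$- and $P_3$-freeness of the positive result mentioned in the abstract, which kills both the relevant joins and the Droms subgroup obstructions) or a combinatorial invariant strictly finer than the extension graph.
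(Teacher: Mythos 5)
You cannot prove this statement, because it is false, and the paper's entire Section~\ref{sec:egc} is devoted to refuting it rather than proving it. To your credit, your proposal ends by predicting that the straightening programme will be defeated and that a counterexample is coming; but a proof attempt that concludes by conceding the statement is probably false is not a proof, and your diagnosis of \emph{why} it fails does not match the actual obstruction. You attribute the failure to ``hidden'' non-visual subgroups such as surface groups arising from induced cycles or long induced paths in $\Gamma_2$. The paper's counterexample involves nothing of the sort: $\Gamma_2$ there is chordal (so $\GG(\Gamma_2)$ is even coherent by Droms), and the mechanism is the elementary ``vertex doubling'' map $\varphi(a)=a_1a_2$, $\varphi(x)=x$ for $x\in\{b,c,d,e\}$, where $a_1,a_2$ are adjacent vertices whose links are arranged so that $a_1a_2$ commutes with exactly the right generators. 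Lemma~\ref{lem:1} shows $\varphi$ is injective by a van Kampen diagram argument (tracking bands $L_{a_1},L_{a_2},L_b,L_c$ to produce an infinite strictly nested sequence of boundary subwords, a contradiction), and Lemma~\ref{lem:2} shows by a centraliser-theorem case analysis that $\Gamma_1$ is \emph{not} an induced subgraph of $\Gamma_2^e$. Together these kill the forward implication of the conjecture.

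Two further corrections to your framing. First, your claim that once each $\phi(v)$ lies in $V^e$ the images ``force'' an induced copy of $\Gamma_1$ is fine as far as it goes, but the counterexample shows the straightening step is not merely hard to execute: no embedding of $\GG(\Gamma_1)$ into $\GG(\Gamma_2)$ can be straightened, since no induced copy of $\Gamma_1$ exists in $\Gamma_2^e$ at all. Second, the surface-subgroup phenomenon you invoke is closer in spirit to the paper's \emph{other} counterexample (Section~\ref{sec:wcc}, refuting the Weakly Chordal Conjecture via $\GG(C_5)\hookrightarrow\GG(\overline P_7)$), yet even there the embedding is again built from the same doubling trick ($c\mapsto c_1c_2$, etc.), not from an abstract coherence or Bass--Serre obstruction. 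The positive content you gesture at in your last sentence is essentially the paper's Theorem~\ref{thm:tcegc}: for $C_4$- and $P_3$-free (thin-chordal) $\Gamma$ the conjecture does hold, proved by induction on vertices using the Kurosh subgroup theorem and Droms' structure result, not by a straightening of generators.
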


In Section \ref{sec:Degc} we extend the class of graphs for which
 Conjecture 1 is known to  hold, to include in addition to the triangle-free 
graphs, a family  lying at the opposite end of the spectrum;  
being ``built out of triangles''. 
 More precisely, we generalise one direction of Droms' theorem to prove that if $\G$ is both $C_4$ and $P_3$ free, then the Extension Graph Conjecture holds for $\GG(\G)$. However, in Section \ref{sec:egc} below we give a counterexample to the Extension Graph Conjecture. We note that Droms also showed that a pc group is coherent if and only if it is chordal \cite{D2}, i.e. $C_n$-free, for all $n \ge 4$. Our 
example in Section \ref{sec:egc}, shows that there are chordal graphs, and so coherent pc groups, for which the Extension Graph Conjecture fails.

As mentioned above, one reason that so much attention has been given to pc groups is that several classes of important groups embed into them. 
For example, motivated by 3-manifold theory, there has been a body of research investigating which partially commutative groups contain  closed hyperbolic surfaces as subgroups (that is, fundamental groups of closed surfaces of Euler characteristic less than $-1$; see \cite{CSS} and the references there).
In  \cite{DSS} it was shown that many orientable surface groups embed into pc groups and  subsequently Crisp and Wiest \cite{CW} showed 
that the fundamental group of every compact surface embeds into some pc group, except in  the cases of the compact, closed, non-orientable, surfaces of Euler characteristic $-1, 0$ and $1$, which do not embed in any pc group. As observed in \cite{R}, every orientable surface group in fact embeds in $\GG(C_5)$.  In Section \ref{sec:wcc} we establish similar results for the complement $\overline P_7$ of $P_7$, in place of $C_5$. 

Although  being an induced subgraph is not a necessary condition for embeddability,  for some particular induced subgraphs it is a useful criterion. For instance,  a pc group $\GG(\Gamma)$ contains $\GG(P_3)$ (resp. $C_4$), if and only if $P_3$ (resp. $C_4$) is an induced subgraph of $\Gamma$, see \cite{Kambites,KK}. In \cite{GLR}, the authors ask if the presence of an induced $n$-cycle, $n\ge 5$, in the defining graph $\Gamma$ is a necessary condition for the group $\GG(\Gamma)$ to contain a hyperbolic surface group. However, counterexamples were given in \cite{K08} and 
\cite{CSS}. For example, the fundamental group of the closed, compact,
orientable surface of genus $2$ embeds in the pc group $\GG(P_2(6))$, 
 although $P_2(6)$ has no induced $n$-cycle, for $n\ge 5$. 
Note however that in the case when the graph $\Gamma$ is triangle-free, 
$\GG(C_n) \leq \GG(\G)$ for some $n \geq 5$ if and only if $C_m \leq \G$ 
for some $5\leq m \leq n$, see \cite[Corollary 48]{KK}.

In spite of these examples,  the question raised by Gordon, Long and Reid survives in the following form.
\begin{question}\label{quest:surface}
Is there a partially commutative group $\GG(\Gamma)$ that contains 
the fundamental group of a closed hyperbolic surface but contains 
no  subgroup isomorphic to $\GG(C_n)$, for  $n\ge 5$?
\end{question}
In this weaker form, the question is still open and emphasises the importance of resolving Question \ref{quest1} in the particular case when $\Lambda$ is a $n$-cycle, $n\ge 5$. 

One approach to Question 2 is to prove that certain classes of graphs give rise
to pc groups containing no subgroups $\GG(C_n)$, $n\ge 5$. 
 A possible candidate is the class of weakly chordal graphs:
 a graph $\Gamma$ is 
called \emph{weakly chordal} if $\Gamma$ does not contain an induced $C_n$ or $\overline C_n$ for $n \ge 5$, where $\overline C_n$ denotes the complement of $C_n$.
Since the extension graph of a weakly chordal graph is weakly chordal, see \cite[Lemma 30]{KK}, one can weaken Conjecture 1 as follows. 

\begin{conj}[{The Weakly Chordal Conjecture}, {\cite[Conjecture 13]{KK}}]
\label{conj:wc}
If $\Gamma$ is a weakly chordal graph, then $\GG(\Gamma)$ does not contain $\GG(C_n)$ for any $n\ge 5$.
\end{conj}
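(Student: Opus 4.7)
The statement is a conjecture which the abstract already announces to be false, so my plan is to refute it by producing an explicit weakly chordal graph $\Gamma$ together with an embedding $\GG(C_n)\hookrightarrow\GG(\Gamma)$ for some $n\ge 5$. The candidate graph suggested by the introduction (``$\overline{P_7}$, in place of $C_5$'') is $\Gamma=\overline{P_7}$. I first verify that this graph is weakly chordal: since $P_7$ is a tree it has no induced cycle at all, while a direct case check on $\overline{P_7}$ (whose edges are the pairs $w_iw_j$ with $|i-j|\ge 2$) rules out induced $C_n$ for $n\ge 5$. By \cite[Lemma 30]{KK} the extension graph $\overline{P_7}^{\,e}$ is then also weakly chordal, and in particular contains no induced $C_n$ with $n\ge 5$; so by Kim--Koberda any such $C_n$-embedding must come from elements more subtle than mere vertex conjugates.

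The heart of the argument is thus the construction of elements $g_1,\ldots, g_n\in\GG(\overline{P_7})$ that pairwise commute exactly along the edges of $C_n$ and generate a copy of $\GG(C_n)$. My approach would be to look for each $g_i$ as a short word in the generators of $\GG(\overline{P_7})$ (for example a product of two non-commuting vertex generators, or a commutator of such), chosen so that the centraliser of $g_i$ in $\GG(\overline{P_7})$ contains $g_{i-1}$ and $g_{i+1}$ but no other $g_j$. Centralisers in partially commutative groups are well understood (Servatius), which reduces this to a finite combinatorial search over short words indexed by the 7 vertices of $\overline{P_7}$.

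The main obstacle, and where I expect to spend most of the effort, is showing that $H=\langle g_1,\ldots, g_n\rangle$ really is isomorphic to $\GG(C_n)$ rather than a proper quotient. I see three possible strategies. The first is to put each $g_i$ in Servatius normal form and show, by an explicit manipulation of pieces, that any word in the $g_i$ which is not a consequence of the cycle relations has non-trivial normal form in $\GG(\overline{P_7})$. The second is to identify $H$ with the fundamental group of a nicely structured subcomplex of the Salvetti complex of $\overline{P_7}$, and read off its isomorphism type directly. The third is to exploit the action of $\GG(\overline{P_7})$ on its Bass--Serre/Guirardel tree relative to some splitting of $\overline{P_7}$ coming from a cut set, turning the injectivity question into a ping-pong or normal-form argument in a graph of groups.

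Assuming one of these strategies succeeds, the counterexample comes for free, and a bonus is the promised surface-group embedding: since closed hyperbolic surface groups embed in $\GG(C_5)$ (or more generally $\GG(C_n)$ for small $n\ge 5$), an embedding of $\GG(C_n)$ into $\GG(\overline{P_7})$ immediately yields new surface subgroups. The only real risk in this plan is that the centraliser constraints may force the $g_i$ to be ``too rigid'', producing extra relations; if that happens, I would increase $n$ or replace $\overline{P_7}$ by the complement of a longer path, where the combinatorial room for placing the $g_i$ grows.
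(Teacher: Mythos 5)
Your overall plan---exhibit $\overline{P_7}$ as a weakly chordal graph and embed $\GG(C_5)$ (equivalently $\GG(\overline{C_5})$, since $C_5$ is self-complementary) into $\GG(\overline{P_7})$---is exactly the route the paper takes, and your verification that $\overline{P_7}$ is weakly chordal is fine. But the proposal stops precisely where the work begins: you never actually write down the elements $g_1,\dots,g_5$, and you never carry out any of the three injectivity strategies you list, instead saying ``assuming one of these strategies succeeds, the counterexample comes for free.'' That assumption is the entire content of the result. The centraliser condition you propose to search for (each $g_i$ commuting with exactly its cycle-neighbours) is necessary but nowhere near sufficient: a tuple satisfying it can still generate a proper quotient of $\GG(C_5)$ or something else entirely, as you yourself note, so without executing one of the injectivity arguments there is no proof.

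For comparison, the paper's construction is a vertex-doubling map $a\mapsto a$, $b\mapsto b$, $c\mapsto c_1c_2$, $d\mapsto d_1d_2$, $e\mapsto e_1e_2$, where each doubled pair consists of \emph{commuting} generators of $\GG(\overline{P_7})$ (your suggestion to take products of \emph{non-commuting} generators would make it harder even to check that the map is a homomorphism, since one needs $[\varphi(x),\varphi(y)]=1$ for every edge $\{x,y\}$ of the commutation graph $\overline{C_5}$). Injectivity is then proved not by normal forms, Salvetti subcomplexes, or Bass--Serre theory, but by a band (dual-curve) analysis in a minimal van Kampen diagram for $\varphi(w)=1$: starting from an outside band for $e_1$, one forces a nested chain of bands labelled $a$ or $d_1$, then $c_1$, then $b$, whose innermost boundary subword must lie in a cyclic subgroup, contradicting the reducedness of $w$. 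Some such concrete argument is indispensable; until you supply the explicit images and a completed injectivity proof, the conjecture has not been refuted.
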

If this were true then, as $P_2(6)$ is weakly chordal, and contains hyperbolic surface groups, we would have a positive answer to Question 2.
However, our second example, in Section \ref{sec:wcc}, gives a  counterexample to the Weakly Chordal Conjecture.

\section{Preliminaries}
\subsection{Partially commutative groups}

By \emph{graph} we mean  simple, undirected graph.  
Let $\Gamma$ be a finite graph with vertices $X$ and let $\GG(\Gamma)$  
be the partially commutative group with commutation graph $\Gamma$. 

If $w$ is a word in the free group on $X$ then we say that 
$w$ is \emph{reduced in} $\GG(\Gamma)$ if $w$ has minimal 
length amongst all words representing the same element of $\GG(\Gamma)$
as $w$. 
If $w$ is reduced in $\GG(\Gamma)$ then 
we define $\az_\G(w)$ to be the set of elements of $X$ such that $x$ or 
$x^{-1}$ occurs in $w$. 
It is well-known that all words which are reduced in $\GG(\G)$ and  
represent a particular
element $g$ of $\GG(\G)$ have the same length, and  that if $w=w'$ in $\GG(\Gamma)$ 
then $\az_\G(w)=\az_\G(w')$. 
  
For a word $w$ in the free monoid on $X\cup X^{-1}$  we 
write $\az(w)$ for the set of elements of $X$ such that $x$ or $x^{-1}$
occurs in $w$. We 
write $u\doteq v$ to indicate that $u$ and $v$ are equal as words
 in the free monoid on $X\cup X^{-1}$.    

\subsection{Van Kampen diagrams}

By van Kampen's Lemma (see \cite{BSR}) 
the word $w$ represents the trivial element in a fixed group $G$ given by the presentation $\langle A \mid R \rangle$ if and only if there exists a finite connected, oriented, based, labelled, planar graph $\mathcal{D}$, where each oriented edge is labelled by a letter in $A^{\pm 1}$, each bounded region 
($2$-\emph{cell}) of $\mathbb{R}^2 \setminus \mathcal{D}$ is labelled by a word in $R$ (up to shifting cyclically or taking inverses) and $w$ can be read on the boundary of the unbounded region of $\mathbb{R}^2 \setminus \mathcal{D}$ from the base vertex. Then we say that $\mathcal{D}$ is a \emph{van Kampen diagram} for the boundary word $w$ over the presentation $\langle A \mid R \rangle$. Note that any van Kampen diagram can also be viewed as a 2-complex, with a $2$-cell attached for each bounded region. A van Kampen diagram for 
the word $w$ is \emph{minimal} if no other van Kampen diagram 
for $w$ has fewer $2$-cells.

Following monograph \cite{O}, if we complete the set of defining relations adding the trivial relations $1\cdot a= a\cdot 1$ for all $a \in A$, then every van Kampen diagram can be transformed so that its boundary is a simple curve. In other words, as a 2-complex the van Kampen diagram is homeomorphic to a disc tiled by $2$-cells which are also homeomorphic to a disc. We shall  assume that all van Kampen diagrams are of this form.

From now on we  shall restrict our considerations to the case when $G$ is a 
partially commutative group. 
Let $\mathcal{D}$ be a minimal van Kampen diagram for the boundary word $w$. 
Given an occurrence of a letter $a\in A^{\pm 1}$ in $w$, there is a $2$-cell 
$C$, in the 2-complex $\mathcal{D}$, attached to $a$. Since every 
$2$-cell in a van Kampen diagram is either labelled by a relation of the 
form $a^{-1}b^{-1}ab$ or is a \emph{padding} $2$-cell, i.e. a $2$-cell 
labelled by $1\cdot a\cdot 1\cdot a^{-1}$, there is just one 
occurrence of $a$ and one occurrence of $a^{-1}$ on the boundary of $C$.

Since $\mathcal{D}$ is homeomorphic to a disc, if the occurrence of $a^{-1}$ on the boundary of $C$ is not on the boundary of $\mathcal{D}$, there exists a unique $2$-cell $C' \ne C$ attached to this occurrence of $a^{-1}$ in $\mathcal{D}$. Repeating this process, we obtain a unique \emph{band} in $\mathcal{D}$.

Because of the structure of the $2$-cells and the fact that $\mathcal{D}$ is homeomorphic to a disc, a band  never self-intersects in a $2$-cell; indeed, since $\mathcal{D}$ is homeomorphic to a disc, a  $2$-cell corresponding to a self-intersection of a band would be labelled by the word $aaa^{-1}a^{-1}$, a contradiction.

Thus, since the number of $2$-cells in $\mathcal{D}$ is finite, in a finite number of steps the band will again meet the boundary in an occurrence of $a^{-1}$ in $w$.

We will use the notation $L_{a}$ to indicate that a band begins (and thus ends) in an occurrence of a letter $a\in A^{\pm 1}$. Fix an orientation of 
the boundary of $\mathcal{D}$ so that the label of the boundary, read
from the base vertex, with this orientation, is the word $w$.  
The band $L_a$ determines a \emph{boundary subword} $w(L_a)$ of $w$: namely the subword of $w$  that is read on the boundary between the ends of $L_a$,
in the direction of fixed orientation. 
In other words, every band $L_a$ induces the following decomposition 
of the boundary word $w$: 
$$
w= w_1 a^\epsilon w(L_a) a^{- \epsilon} w_2,
$$
where $\epsilon \in \{ \pm 1\}$.

Furthermore the band $L_a$ determines a \emph{band word} $z(L_a)$: namely the word obtained by reading along the boundary of the band from the terminal end of $a^\epsilon$ to the initial end of $a^{-\epsilon}$. 

Note that if two bands $L_a$ and $L_b$ cross then the intersection $2$-cell 
realises the equality $a^{-1}b^{-1}ab=1$ and so $a\ne b$ and $[a,b]=1$. 
It follows that if a letter $c$  occurs  in $w(L_a)$ and $c=a^{\pm 1}$ or $[c,a] \ne 1$, then the ends of the band $L_c$ both occur in $w(L_a)$, i.e. the band $L_c$ is contained in the region bounded by $z(L_a)$, and $w(L_a)$. Notice that since $z(L_a)$ labels the boundary of the band $L_a$, it follows that every letter $b \in z(L_a)$ we have that $b\ne a^{\pm 1}$ and $[b,a]=1$.

The following will be useful in the next section. 
If $L_u$ is a band, where $u=a^{\pm 1}$, for some $a\in A$
such that 
$a\notin\az(w(L_u))$, then we say that $L_{u}$ is an 
\emph{outside band}.   
\begin{lem}\label{lem:bands}
Let $\mathcal{D}$ be a 
van Kampen diagram for the boundary word $w$.
\begin{enumerate}
\item\label{it:bands1} If $x\in\az(w)$ then there exists an outside band $L_u$, $u=x^{\pm 1}$. 
\item\label{it:bands2} Let $L_v$ be a band, where $v\in A^{\pm 1}$. If   $y\in \az(w(L_v))$, such that
$[y,v]\neq 1$ or $y=v^{\pm 1}$, then there exists an outside band $L_z$, 
with $z= y^{\pm 1}$, which has  both ends in $w(L_v)$.  
\end{enumerate}
\end{lem}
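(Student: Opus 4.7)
The plan is to reduce both parts to a single combinatorial fact about bands together with an innermost arc argument on a non-crossing matching. The key preliminary observation, already implicit in the preceding discussion, is that two bands $L_a$ and $L_b$ cannot cross inside $\mathcal{D}$ whenever $a = b$ or $[a,b] \neq 1$: a crossing 2-cell would be labelled by the commutator relator $[a,b]$, contradicting either the non-existence of a relator $[a,a]$ or the hypothesis that $a$ and $b$ do not commute. In particular, bands labelled by the same letter determine a non-crossing matching on the corresponding occurrences along any simple curve they all cross.

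For part \ref{it:bands1}, fix $x \in \az(w)$. Every occurrence of $x^{\pm 1}$ in $w$ is an endpoint of a unique $x$-band, and both endpoints of each such band lie on $\partial \mathcal{D}$. By the preliminary observation, distinct $x$-bands do not cross, so they induce a non-crossing matching on the set of $x^{\pm 1}$-occurrences along $\partial\mathcal{D}$. Any such matching on a linearly (or cyclically) ordered finite set has an innermost arc, that is, a matched pair with no other matched occurrence strictly between its endpoints. The band $L_u$ corresponding to this arc satisfies $x \notin \az(w(L_u))$ and is therefore an outside band.

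For part \ref{it:bands2}, let $L_v$ and $y$ satisfy the hypotheses. The paragraph immediately preceding the lemma, applied with $c = y^{\pm 1}$ and $a = v$, already gives that every $y$-band meeting $w(L_v)$ has both endpoints inside $w(L_v)$. Thus the $y$-bands through $w(L_v)$ form a non-crossing matching (again by the non-crossing observation, using $y = v^{\pm 1}$ or $[y,v] \neq 1$ only to keep the bands inside the sub-region bounded by $z(L_v)$ and $w(L_v)$) on the $y^{\pm 1}$-occurrences in $w(L_v)$. Choosing an innermost arc of this matching yields a band $L_z$ with $z = y^{\pm 1}$, both ends inside $w(L_v)$, and $y \notin \az(w(L_z))$, exactly the desired outside band.

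The only real care needed is bookkeeping: in part \ref{it:bands1} one should treat $\partial \mathcal{D}$ cyclically so that an innermost arc always exists and choose $w(L_u)$ to be the side of the arc not containing other endpoints; in part \ref{it:bands2} one should make sure the innermost arc is taken \emph{inside} the region enclosed by $L_v$, so that the corresponding $w(L_z)$ sits inside $w(L_v)$. Modulo these orientation issues, the proof is a direct consequence of the non-crossing property of bands and the standard combinatorics of non-crossing matchings.
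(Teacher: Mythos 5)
Your proof is correct and follows essentially the same route as the paper: the paper's choice of a band minimizing $|w(L_u)|$ (resp.\ $|w(L')|$) is exactly one way of selecting your ``innermost arc'' of the non-crossing matching, and in both parts the non-crossing property of bands sharing a letter (or of $L_v$ with the $y$-bands) does the same work. The only cosmetic difference is that you make the matching language explicit; no gap.
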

\begin{proof}
\begin{enumerate}
\item Amongst all bands beginning and ending in $x^{\pm 1}$, choose a band $L_u$, $u=x^{\pm 1}$, such that the length of $w(L_u)$ is minimal. As no two
bands with ends   $x^{\pm 1}$ can cross it follows that $x\notin \az(w(L_u))$. 
Hence $L_u$ is an outside band.
\item As $y\in \az(w(L_v))$, there exists a band $L$ with one end at an
occurrence of $y^{\pm 1}$ in $w(L_v)$. As $[y,v]\neq 1$ or $y=v^{\pm 1}$,  
bands  $L_v$ and $L$ cannot cross, so $L$ has both ends in $w(L_v)$. That is,
every band with one end at   an
occurrence of $y^{\pm 1}$ in $w(L_v)$ has both ends in $w(L_v)$. Now
choose such a band $L'$ with $|w(L')|$ minimal over lengths of boundary
words of all
bands ending at occurrences of $y^{\pm 1}$ in $w(L_v)$. As in the proof of the
 first 
part of the Lemma, $L'$ is an outside band.
\end{enumerate}
\end{proof}

\section{A counterexample to the Extension Graph Conjecture}\label{sec:egc}

In this Section we give a counterexample to Conjecture \ref{conj:egc}.
Let $\Gamma_1$ and $\Gamma_2$ be the graphs of Figure \ref{fig:1a} and 
Figure \ref{fig:1b}, respectively. 
\begin{figure}
\psfrag{a}{$a$}
\psfrag{a1}{$a_1$}
\psfrag{a2}{$a_2$}
\psfrag{b}{$b$}
\psfrag{c}{$c$}
\psfrag{d}{$d$}
\psfrag{e}{$e$}
\begin{center}
\begin{subfigure}[b]{.45\columnwidth}
\begin{center}
\includegraphics[scale=0.3]{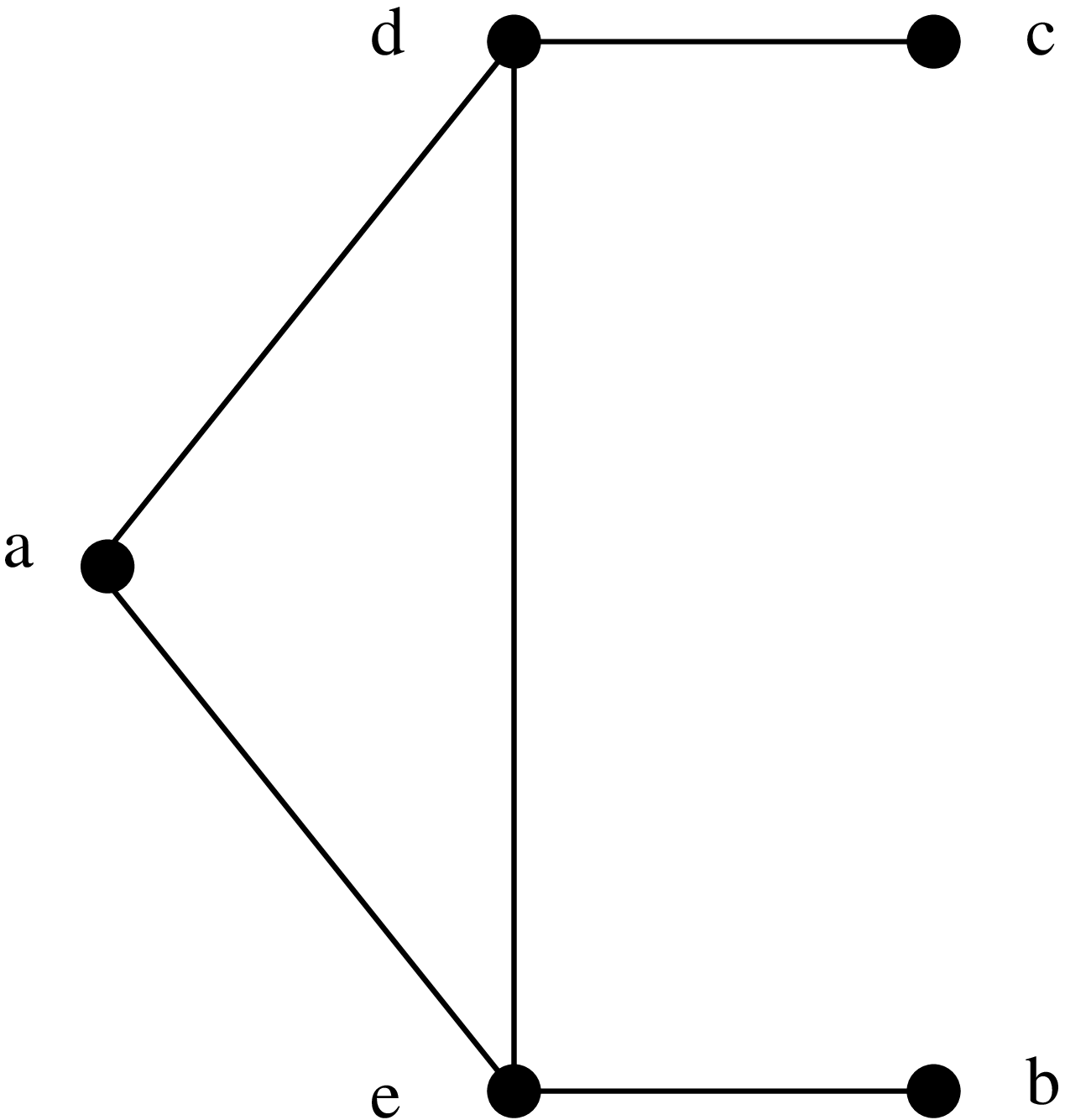}
\caption{$\Gamma_1$}
\label{fig:1a}
\end{center}
\end{subfigure}
\begin{subfigure}[b]{.45\columnwidth}
\begin{center}
\includegraphics[scale=0.3]{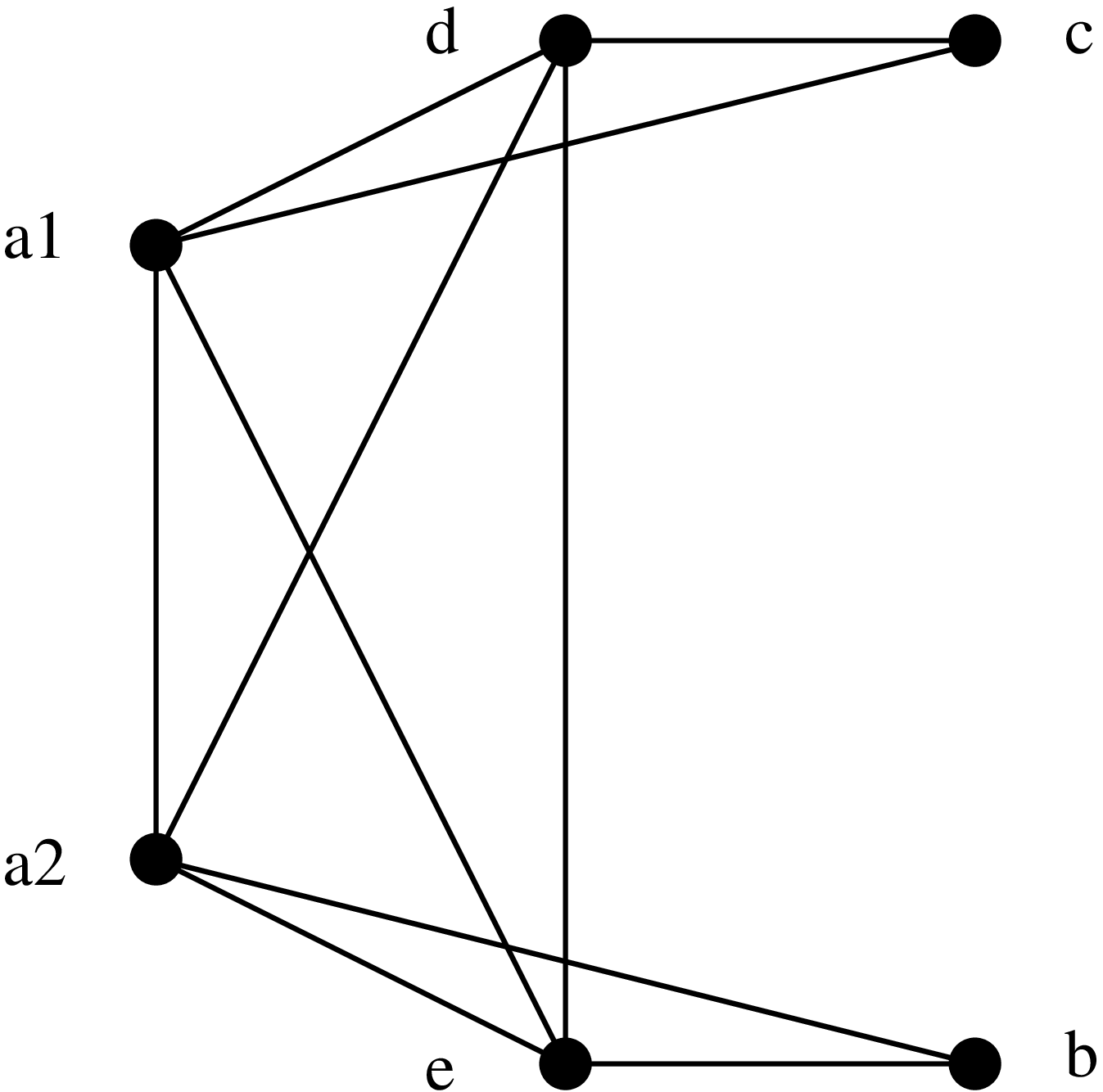}
\end{center}
\caption{$\Gamma_2$}
\label{fig:1b}
\end{subfigure}
\end{center}
\caption{}\label{fig:1}
\end{figure}
Let $\GG(\Gamma_1)$ and $\GG(\Gamma_2)$ be the corresponding partially commutative groups: 
$$
\begin{array}{ll}
\GG(\Gamma_1) =& \langle a, b,c,d,e\mid [a,d], [a,e], [b,e], [c,d], [d,e]\rangle, \\
\GG(\Gamma_2) =& \langle a_1, a_2, b, c, d,e \mid [a_1,a_2], [a_1, c], [a_1,d],[a_1,e], [a_2,b],[a_2,d],[a_2,e], [b,e],[c,d],[d,e]\rangle.
\end{array}
$$

Define a map
$$
\varphi =\left\{ 
\begin{array}{l}
a\mapsto a_1a_2;\\
b\mapsto b;\\
c\mapsto c;\\
d\mapsto d;\\
e\mapsto e.
\end{array}
\right.
$$

\begin{lem} \label{lem:1}
The map $\varphi$ defined above extends to a monomorphism $\varphi: \GG(\Gamma_1)\to \GG(\Gamma_2)$.
\end{lem}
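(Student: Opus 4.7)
The plan is first to verify that $\varphi$ is well-defined as a group homomorphism, and then to establish injectivity via a van Kampen diagram analysis using the band machinery of Section~2.

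For well-definedness, one checks that every defining relator of $\GG(\Gamma_1)$ maps to the trivial element of $\GG(\Gamma_2)$. The only nontrivial checks involve relators containing $a$: since $[a_1,d]=[a_2,d]=1$ in $\GG(\Gamma_2)$, it follows that $[a_1a_2,d]=1$, and analogously $[a_1a_2,e]=1$; the remaining defining relators involve only letters fixed by $\varphi$, so they are preserved.

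For injectivity I would argue by contradiction. Suppose $w$ is a non-trivial word reduced in $\GG(\Gamma_1)$ with $\varphi(w)=1$ in $\GG(\Gamma_2)$; let $\mathcal{D}$ be a minimal van Kampen diagram for $\varphi(w)$. The structural observation driving the argument is that each $a^{\pm 1}$ in $w$ produces a consecutive \emph{block} of two letters in $\varphi(w)$ (namely $a_1a_2$ or $a_2^{-1}a_1^{-1}$), while every letter from $\{b,c,d,e\}$ is preserved. Pick an outside band $L$ of minimum enclosed area in $\mathcal{D}$, with label $v$. By Lemma~\ref{lem:bands}(ii) combined with the minimality of area, every letter in $w(L)$ commutes with $v$ in $\GG(\Gamma_2)$, since otherwise the lemma would yield an outside band of strictly smaller area. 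A block-partner argument then forces any $a_i^{\pm 1}$ inside $w(L)$ to come accompanied by its block-partner, unless that partner is ruled out by being an endpoint of $L$ itself.

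Case $v\in\{b,c\}^{\pm 1}$: the commuting condition together with the block-partner analysis rules out any $a_i^{\pm 1}$ in $w(L)$, because the partner would fail to commute with $v$; hence $w(L)$ is a word in $\{e\}^{\pm 1}$ (if $v=b$) or $\{d\}^{\pm 1}$ (if $v=c$). The corresponding subword of $w$ strictly between the two $v^{\pm 1}$ endpoints of $L$ then consists of letters commuting with $v$ in $\GG(\Gamma_1)$, so those endpoints may be brought adjacent and cancelled, contradicting reducedness. Case $v\in\{d,e\}^{\pm 1}$: complete $a$-blocks are now allowed inside $w(L)$, but the letters of $w$ corresponding to $w(L)$ are drawn from $\{a,c,e\}^{\pm 1}$ (for $v=d$) or $\{a,b,d\}^{\pm 1}$ (for $v=e$), all of which commute with $v$ in $\GG(\Gamma_1)$, so the same cancellation argument applies. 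The degenerate case $\az(w)\subseteq\{a\}$ is immediate: $w=a^k$ with $k\neq 0$ forces $\varphi(w)=a_1^ka_2^k\neq 1$ since $\langle a_1,a_2\rangle\cong\mathbb{Z}^2$.

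The delicate case is $v\in\{a_1,a_2\}^{\pm 1}$; say $v=a_1$. Then $w(L)=a_2\cdot s\cdot a_2^{-1}$ where $s$ is a word in $\{c,d,e\}^{\pm 1}$, and the corresponding subword of $w$ between the $a$ and $a^{-1}$ producing the endpoints of $L$ is also a word in $\{c,d,e\}^{\pm 1}$. If this subword has no $c$, all its letters commute with $a$ in $\GG(\Gamma_1)$ and we cancel as before. Otherwise, consider the $a_2$-band $L'$ starting at the block-partner of an endpoint of $L$. If both endpoints of $L'$ lay in $w(L)$, then $L'$ would be an outside $a_2$-band of strictly smaller area than $L$, contradicting the minimality of $L$; hence $L'$ must cross $L$ at an $[a_1,a_2]$-cell. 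The main obstacle is to extract a contradiction from such a crossing configuration. The expected strategy is to perform a local surgery in $\mathcal{D}$ at the crossing that produces a van Kampen diagram for $\varphi(w)$ with strictly fewer $2$-cells, contradicting the minimality of $\mathcal{D}$; equivalently, one shows that after a suitable modification the pairing of $a$-blocks induced by $a_1$-bands coincides with the pairing induced by $a_2$-bands, so that parallel $a_1$- and $a_2$-bands may be merged into $a$-bands, yielding a van Kampen diagram for $w$ over $\GG(\Gamma_1)$ and forcing $w=1$, contrary to assumption.
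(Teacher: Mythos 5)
Your homomorphism check and the elimination of the easy configurations are sound and broadly parallel to the paper's preliminary reductions: the paper likewise observes that $w$ must contain $a$ (because the parabolic subgroup on $\{b,c,d,e\}$ is clearly preserved), works with outside bands via Lemma~\ref{lem:bands}, and uses the fact that every $a_i^{\pm1}$ in $\varphi(w)$ sits in a complete block $(a_1a_2)^{\pm1}$ so that an outside $a_1$-band encloses a word of the form $a_2 s a_2^{-1}$ with $s\in\langle b,c,d,e\rangle$. Your case analysis on the label of a minimum-area outside band is a reasonable (if slightly informal about what ``enclosed area'' means when bands may cross) way to reduce to the same point the paper reaches: an outside $a_1$-band $L$ with $w(L)=a_2sa_2^{-1}$, where $s$ must contain $c^{\pm1}$ and every $c$-band meeting $s$ must have its other end outside $w(L)$.

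The genuine gap is that you stop exactly at the crux. Having shown that a $c$-band escapes $w(L)$ and that the $a_2$-partner band must cross $L$, you offer only an ``expected strategy'': a local surgery at the $[a_1,a_2]$-crossing, or a merging of $a_1$- and $a_2$-bands into $a$-bands. Neither is an argument, and neither obviously works. A single $[a_1,a_2]$-cell where the two bands cross is locally perfectly consistent and cannot be removed without changing the boundary word, so no local surgery reduces the cell count; and the merging idea fails as stated because $a_1$ and $a_2$ have different links in $\Gamma_2$ ($a_1$ commutes with $c$, $a_2$ with $b$, while $a$ commutes with neither in $\Gamma_1$), so the $a_1$-bands and $a_2$-bands emanating from the $a$-blocks need not induce the same pairing of blocks and need not be parallel. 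The paper's proof replaces this missing step with a global infinite-descent argument: from the escaping $c$-band $L_{c_1}$ and the trapped $a_2$-band one extracts (using the symmetric claim for $a_2$ and $b$) a $b$-band whose boundary word is strictly contained in $w(L_{c_1})$ and still contains a full $a$-block; inside it one finds another outside $a_1$-band, hence another escaping $c$-band $L_{c_2}$ with $w(L_{c_2})$ strictly inside $w(L_{b_1})$, and so on. This produces an infinite strictly nested chain of subwords of the finite word $\varphi(w)$, which is the contradiction. That descent is the real content of the lemma and is absent from your proposal.
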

\begin{proof}
It is immediate that 
$$
\varphi([a,d])=\varphi([a,e])=\varphi([b,e])=\varphi([c,d])=\varphi([d,e])=1
$$
in $\GG(\Gamma_2)$, hence $\varphi$ extends to a homomorphism. 

We show that $\varphi$ is injective. 
For a contradiction assume that $\ker(\varphi)$ contains a non-trivial element $g$ and let $w=w(a,b,c,d,e)\in \GG(\Gamma_1)$ be 
a minimal length word representing $g$. 
Consider  a minimal van Kampen diagram $\mathcal D$ for $\varphi(w)=w(a_1a_2,b, c,d,e)=1$.

Since the canonical parabolic subgroup generated by $\{b,c,d,e\}$ is isomorphic to its image by $\varphi$, it follows that the word $w$ must contain at least one occurrence of the letter $a$ (and its inverse $a^{-1}$). 
Since $\varphi(a)=a_1a_2$, $\varphi(w)$ contains at least one 
occurrence of $a_1$ and of $a_2$ and every occurrence of $a_1^{\pm 1}$ 
(or $a_2^{\pm 1}$) in $\varphi(w)$ occurs in a subword $(a_1a_2)^{\pm 1}$.
Moreover, if $w$ has  subword of the form 
$(a_1a_2)^\epsilon u(a_1a_2)^{-\epsilon}$, with $\epsilon \in\{\pm 1\}$ and
$a_1\notin \az(u)$, then it follows that $a_2\notin \az(u)$ (and if 
$a_2\notin \az(u)$ then $a_1\notin \az(u)$). 

We claim that if $L_x$ is an outside band, with $x=a_1^{\pm 1}$, then
 there exists a band $L$ with one end at an occurrence of $c^{\pm 1}$
in $w(L_x)$ and its other end not in  $w(L_x)$. Similarly, if 
$L_y$ is an outside band, with $y=a_2^{\pm 1}$, then
 there exists a band $L$ with one end at an occurrence of $b^{\pm 1}$
in $w(L_y)$ and its other end not in  $w(L_y)$. As there is an isomorphism
of $\G_2$ interchanging $a_1$ and $a_2$ and $b$ and $c$, it suffices
to prove the first of these claims. 

To see that the first claim holds, suppose  that 
$L_x$ is an outside band, with $x=a_1^{\pm 1}$. Without loss of generality
(replacing $w$ with $w^{-1}$ if necessary), we 
may assume that $x=a_1$. Then the band $L_x$ induces the 
 decomposition 
$\varphi(w)\doteq w_1 a_1 a_2 w_2 a_2^{-1} a_1^{-1} w_3$ 
and,
 since $a_1\notin \az(w_2)$, it follows that   
 $a_1,a_2\notin \az(w_2)$.  Then $\az(w_2)$ must contain either $b$ or $c$.  
Indeed, otherwise, since $w_2$ belongs to the subgroup generated by 
$\{b,c,d,e\}$ and so $w_2=\varphi(w_2)$, the subword $a w_2 a^{-1}$ 
of $w$ would not be reduced in $\GG(\Gamma_1)$, a contradiction. 
Also, as $w_2=\varphi(w_2)$, the word $w_2$ is reduced in $\GG(\G_2)$. 

Assume  to begin with that $\az(w_2)$ does not contain $c$; so does contain 
 $b$. In this case, since $[a_1,b]\neq 1$, 
Lemma \ref{lem:bands}.\ref{it:bands2} implies the existence of an outside band 
$L_z$, where $z=b^{\pm 1}$, with both ends in $w_2$. As $L_z$ is an outside band
$b\notin \az(w(L_z))$ and hence, since $w(L_z)$ is a subword of $w_2$,
we have $\az(w(L_z))\subseteq \{d,e\}$. Moreover, 
$w_2$ is reduced in $\GG(\G_2)$   
so $w(L_z)$ must contain an occurrence $d^{\pm 1}$. 
From Lemma \ref{lem:bands}.\ref{it:bands2} again, there is an outside band
$L_{v}$, where $v= d^{\pm 1}$, with both ends in $w(L_z)$. However, 
this means that $\az(w(L_{v}))\subseteq
\{e\}$, and so $w_2$ is not reduced  in $\GG(\G_2)$, a contradiction.

Therefore we conclude that  $c\in \az(w_2)$
and there is a band $L_{u}$, where $u=c^{\pm 1}$, with one
end in $w_2$. If both ends of $L_u$ are in $w_2$ then
there exists an outside band $L_z$, $z=c^{\pm 1}$, with both ends in
$w_2$ (Lemma \ref{lem:bands}.\ref{it:bands2}). As $w_2$ is reduced 
in $\GG(\G_2)$,
$\az(w(L_z))$ must contain either $b$ or $e$. If $b\in \az(w(L_z))$ then,
since $c\notin \az(w(L_z))$, we obtain a contradiction, using the 
argument of the previous paragraph. Therefore $b\notin \az(w(L_z))$
and $e\in \az(w(L_z))$. This implies the existence of an outside
band $L_v$, $v=e^{\pm 1}$, with both ends in $w(L_z)$. However, 
in this case $\az(w(L_v))\subseteq \{d\}$, and again $w_2$ is
not reduced in $\GG(\G_2)$, a contradiction. 
Thus one end of $L_{u}$ lies in $w_1$
or $w_3$. This proves the claim, for $L_x$, with $L=L_u$.  

 Returning to the proof of the lemma, let $L_{x_1}$ be an 
outside band, with $x_1=a_1^{\pm 1}$, and let 
$w=w_1(a_1a_2)^\epsilon w_2(a_1a_2)^{-\epsilon}w_3$, be
the corresponding decomposition of $w$, where 
$w(L_{x_1})=w_2$, if $\epsilon =1$ and $w(L_{x_1})=a_2^{-1}w_2a_2$, otherwise.
From the claim above, there exists a band $L_{c_1}$, where $c_1=c^{\pm 1}$, 
 with one end in $w_2$
and the other end in $w_1$ or $w_3$.  Taking a cyclic permutation of $w$ (and $\varphi(w)$), if 
necessary, we may assume that $L_{c_1}$ has its other end in $w_1$.

Let $L'$ be the band which has one end at the occurrence
of $a_2^{\pm 1}$ in $(a_1a_2)^{\epsilon}w_2$. 
Since $[a_2,c]\neq 1$ the bands $L_{c_1}$ and $L'$ cannot cross and, 
as $a_2\notin \az(w_2)$, the other end of $L'$ must lie in $w_1$ 
(see Figure \ref{fig:2}). Now we find an outside band $L_{y_1}$, 
where $y_1=a_2^{\pm 1}$, with both ends in $w(L')$. Hence $w_1$ decomposes as 
\[
w_1\doteq w_{11}c^{-\delta}w_{12}a_2^{-\epsilon}w_{13}a_2^{\gamma} w_{14}a_2^{-\gamma}w_{15},
\]
for some $\gamma,\delta\in\{\pm 1\}$, as shown in 
  Figure \ref{fig:2}. 
\begin{figure}
\psfrag{w11}{$w_{11}$}
\psfrag{w12}{$w_{12}$}
\psfrag{w13}{$w_{13}$}
\psfrag{w14}{$w_{14}$}
\psfrag{w15}{$w_{15}$}
\psfrag{Lx}{$L_{x_1}$}
\psfrag{Ly}{$L_{y_1}$}
\psfrag{Lc}{$L_{c_1}$}
\psfrag{L'}{$L'$}
\begin{center}
\includegraphics[scale=0.55]{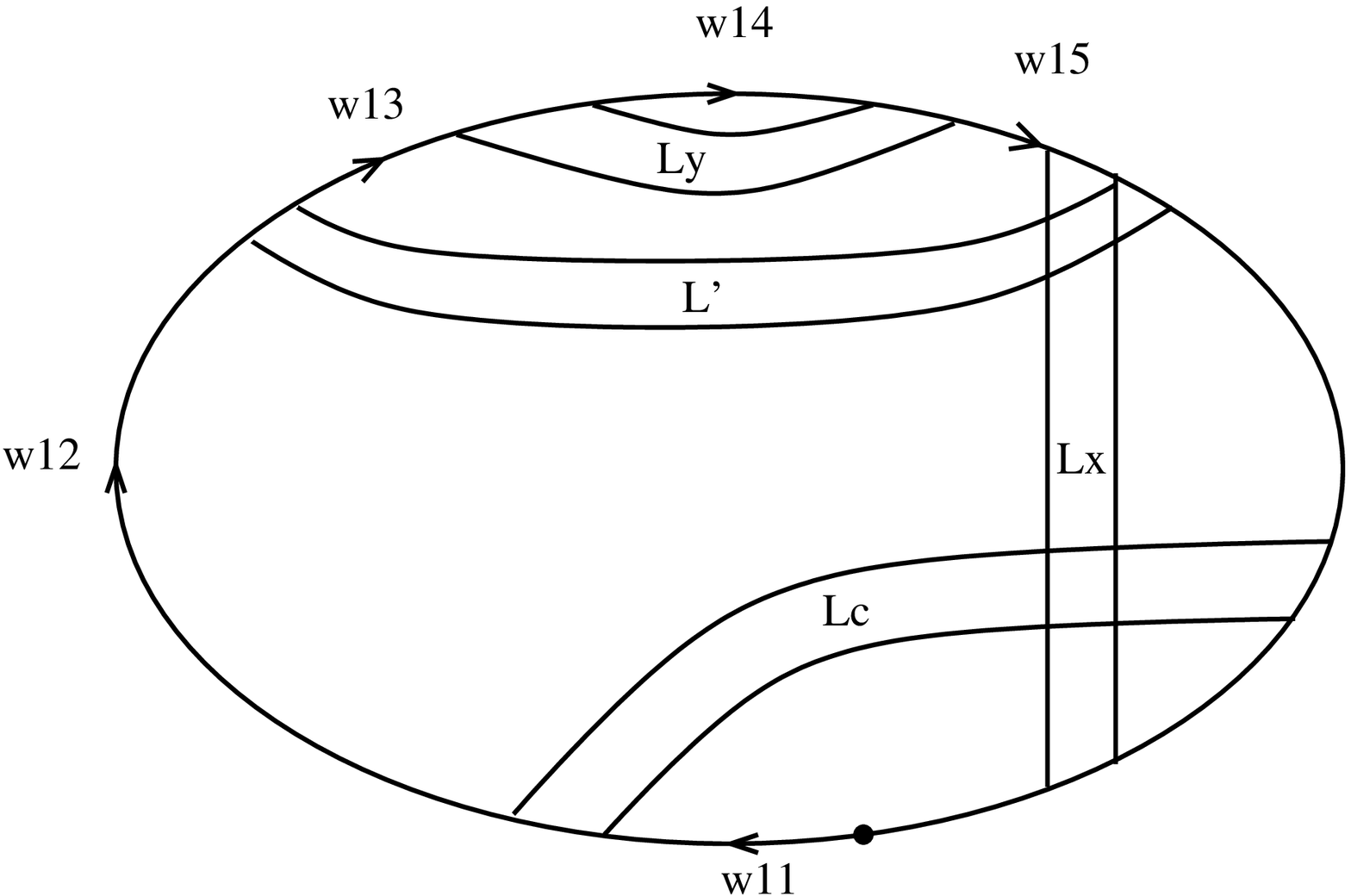}
\caption{}\label{fig:2}
\end{center}
\end{figure}
Thus, the claim above implies  
 there exists a band 
$L_{b_1}$, where $b_1=b^{\pm 1}$, with exactly one end in $w(L_y)=w_{14}$. 
As $[b,c]\neq 1$ and $[b,a_1]\neq 1$ the band $L_{b_1}$ cannot 
cross  $L_{c_1}$ or $L_x$,  so the 
other end of $L_{b_1}$ lies in  
in $w_{12}$, $w_{13}$ or 
$w_{15}$. In particular $w(L_{b_1})$ is a subword of $w(L_{c_1})$.
Moreover, since $w(L_{b_1})$ contains an occurrence of $a_2^{\pm 1}$ we 
have $a_1,a_2\in \az(w(L_{b_1}))$. 

Now suppose that we have occurrences of letters 
$c_1,\ldots ,c_n$, $b_1,\ldots , b_n$, with $c_i=c^{\pm 1}$ and 
$b_i=b^{\pm 1}$, and corresponding bands $L_{c_i}$, $L_{b_i}$, such
that $w(L_{b_i})$ is a subword of $w(L_{c_i})$, for $i=1,\ldots, n$,  
$w(L_{c_{i+1}})$ is a subword of $w(L_{b_i})$, for $i=1,\ldots, n-1$,  
and $a_1,a_2\in w(L_{b_n})$. 

 As $[a_1,b]\neq 1$ there exists an outside band $L_{x_{n+1}}$, where 
$x_{n+1}=a_1^{\pm 1}$, with both ends in $w(L_{b_n})$. Thus 
$w(L_{b_n})$ decomposes as 
\[w(L_{b_n})\doteq u_1(a_1a_2)^{\zeta}u_2(a_1a_2)^{-\zeta}u_3,
\]
where $L_{x_{n+1}}$ has both ends at occurrences of $a_1^{\pm 1}$ in 
$(a_1a_2)^{\zeta}u_2(a_1a_2)^{-\zeta}$, 
for some $u_2\in \langle b,c,d,e\rangle$ and $\zeta \in\{\pm 1\}$. From the claim above, there is a band 
$L_{c_{n+1}}$, where $c_{n+1}=c^{\pm 1}$, with one end in $u_2$ and 
the other end not in $u_2$. As $[b,c]\neq 1$, the other end of  
 $L_{c_{n+1}}$ lies in $u_1$ or $u_3$. In particular, $w(L_{c_{n+1}})$
 is a subword of $w(L_{b_n})$. 

If  $L_{c_{n+1}}$ has one end in $u_1$ then let $L''$ be the
band which has one end at the occurrence of $a_2^{\pm 1}$ in 
$(a_1a_2)^{\zeta}u_2$, as shown in 
  Figure \ref{fig:vk1}. 
\begin{figure}
\psfrag{Lb}{$L_{b_n}$}
\psfrag{Lbn}{$L_{b_{n+1}}$}
\psfrag{Lcn}{$L_{c_{n+1}}$}
\psfrag{Lx}{$L_{x_{n+1}}$}
\psfrag{Ly}{$L_{y_{n+1}}$}
\psfrag{Lc}{$L_{c_1}$}
\psfrag{L'}{$L''$}
\psfrag{...}{$\cdots$}
\begin{center}
\includegraphics[scale=0.55]{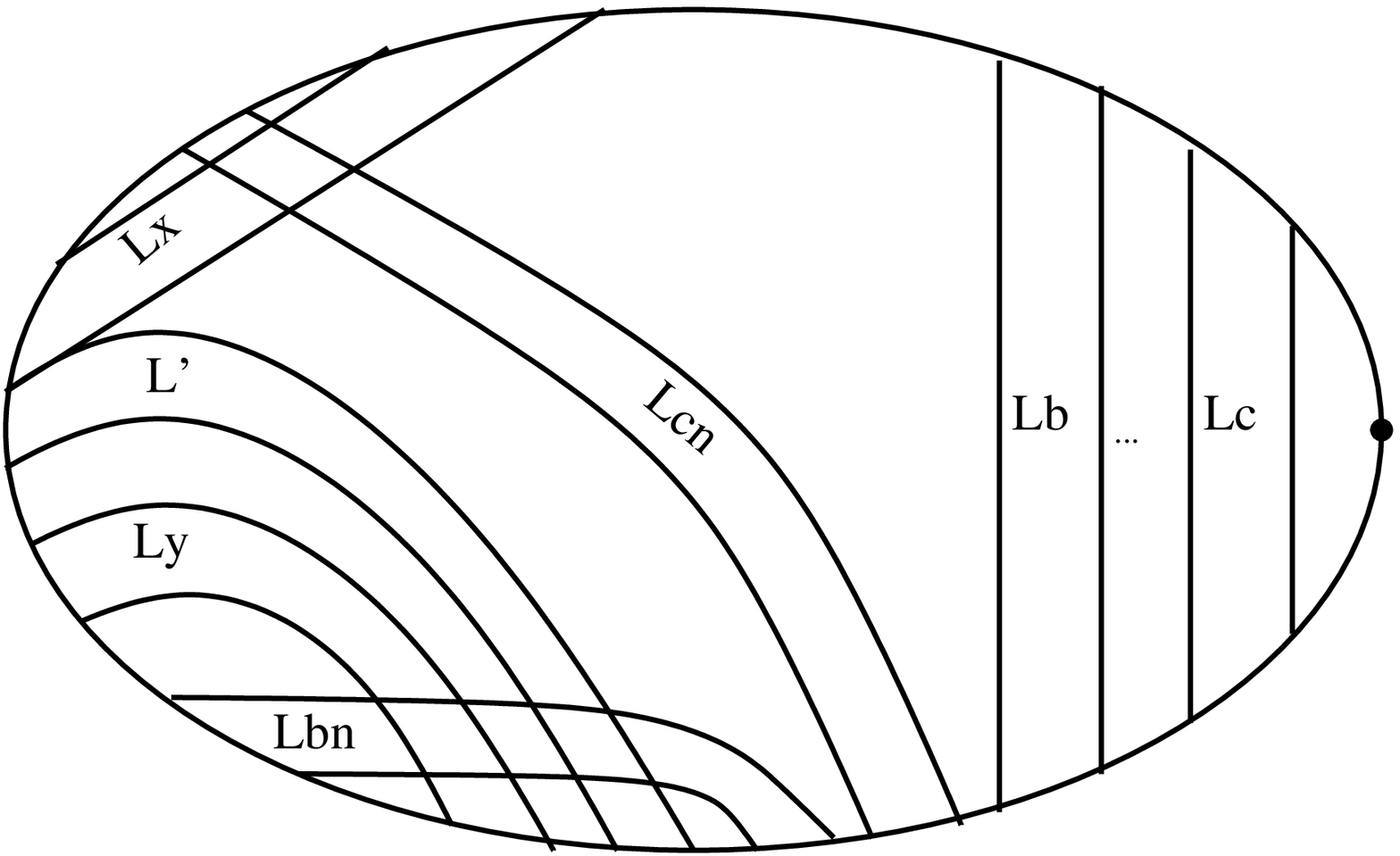}
\caption{}\label{fig:vk1}
\end{center}
\end{figure}
Then $w(L'')$ is a subword of $u_1(a_1a_2)^{\zeta}$ and there exists
an outside band $L_{y_{n+1}}$, where $y_{n+1}=a_2^{\pm 1}$, with both
ends in $w(L'')$. 

From the claim again, we then have a band 
 $L_{b_{n+1}}$, where $b_{n+1}=b^{\pm 1}$, with one end in 
$w(L_{y_{n+1}})$ and the other end outside $w(L_{y_{n+1}})$. As $[b,c]\neq 1$, it 
follows that the other end of $L_{b_{n+1}}$ lies in $w(L_{c_{n+1}})$, 
so $w(L_{b_{n+1}})$ is a subword of $w(L_{c_{n+1}})$. 

If  $L_{c_{n+1}}$ has one end in $u_3$ then let $L''$ be the
band which has one end at the occurrence of $a_2^{\pm 1}$ in 
$u_2(a_1a_2)^{-\zeta}$;
see Figure \ref{fig:4}. 
\begin{figure}
\psfrag{Lb}{$L_{b_n}$}
\psfrag{Lbn}{$L_{b_{n+1}}$}
\psfrag{Lcn}{$L_{c_{n+1}}$}
\psfrag{Lx}{$L_{x_{n+1}}$}
\psfrag{Ly}{$L_{y_{n+1}}$}
\psfrag{Lc}{$L_{c_1}$}
\psfrag{L'}{$L''$}
\psfrag{...}{$\cdots$}
\begin{center}
\includegraphics[scale=0.55]{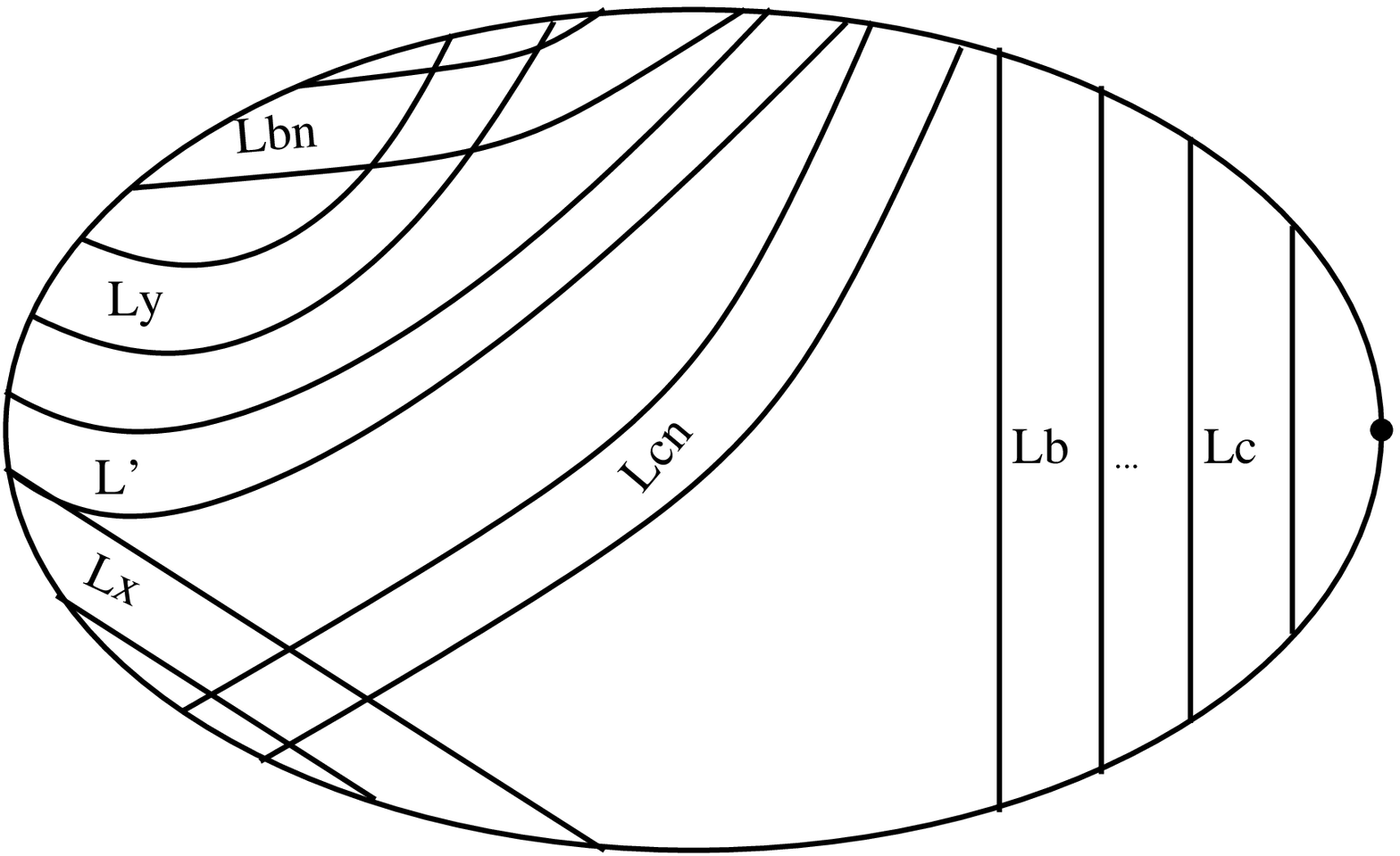}
\caption{}\label{fig:4}
\end{center}
\end{figure}
 A similar argument shows that we can 
find a band $L_{y_{n+1}}$, where $y_{n+1}=a_2^{\pm 1}$, with both
ends in $w(L'')$, and then a band $L_{b_{n+1}}$,
 where $b_{n+1}=b^{\pm 1}$, with exactly one end in 
$w(L_{y_{n+1}})$ and the other end in $w(L_{c_{n+1}})$, so in this case
as well,  $w(L_{b_{n+1}})$ is a subword of $w(L_{c_{n+1}})$. 
As only one
end of $L_{b_{n+1}}$ lies in $w(L_{y_{n+1}})$ it follows, as above, 
that $a_1,a_2\in \az(w(L_{b_{n+1}}))$. Hence we can extend the 
list above, of occurrences $c_i$ and $b_i$, to $c_{n+1}$ and $b_{n+1}$. 

Therefore, if such a diagram exists, there is an infinite sequence of
subwords $w(L_{c_i})$ of $\varphi(w)$, no two of which are equal, such that 
$w(L_{c_{i+1}})$ is a subword of  $w(L_{c_i})$. Since $\varphi(w)$ 
is of finite length, this is a contradiction. 
\end{proof}

\begin{lem} \label{lem:2}
The graph $\Gamma_1$ is not an induced subgraph of the extension 
graph $\Gamma_2^e$.
\end{lem}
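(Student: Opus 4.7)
The plan is to suppose, for contradiction, that $\phi\colon V(\Gamma_1)\hookrightarrow V(\Gamma_2^e)$ is an induced embedding in which each $\phi(v)$ is a conjugate of some generator $\tau(v)\in V(\Gamma_2)$, and to derive a contradiction from a ``twin pair'' phenomenon specific to $\Gamma_2$. The starting observation is that in any pc group $\GG(\Gamma)$ two distinct conjugates of the same generator never commute: the centralizer of a generator $x$ is the parabolic $\GG(\lk_\Gamma(x)\cup\{x\})$, in which $x$ is itself central, so $x^g\in C(x)$ forces $g\in C(x)$ and therefore $x^g=x$. Hence $\tau\colon V(\Gamma_1)\to V(\Gamma_2)$ sends each edge of $\Gamma_1$ to an edge of $\Gamma_2$ between distinct vertices, and in particular the triangle $\{a,d,e\}$ of $\Gamma_1$ is carried under $\tau$ to a triangle of $\Gamma_2$.

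The key combinatorial observation is that every triangle of $\Gamma_2$ contains one of the two pairs $\{a_1,d\}$ or $\{a_2,e\}$, and that each of these pairs consists of \emph{twins}, in the sense that $\lk_{\Gamma_2}(a_1)\cup\{a_1\}=\lk_{\Gamma_2}(d)\cup\{d\}=\{a_1,a_2,c,d,e\}$ and $\lk_{\Gamma_2}(a_2)\cup\{a_2\}=\lk_{\Gamma_2}(e)\cup\{e\}=\{a_1,a_2,b,d,e\}$. Direct inspection shows the six triangles of $\Gamma_2$ to be $\{a_1,a_2,d\},\{a_1,a_2,e\},\{a_1,c,d\},\{a_1,d,e\},\{a_2,b,e\}$ and $\{a_2,d,e\}$, verifying the claim. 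Moreover, if $\{X,Y\}$ is a twin pair then in the parabolic $P_X=\GG(\lk_{\Gamma_2}(X)\cup\{X\})$ both $X$ and $Y$ are adjacent to every other vertex of the induced subgraph on their common closed star; hence both are central in $P_X$, and $P_X=P_Y$.

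Fix a twin pair $\{X,Y\}\subseteq\tau(\{a,d,e\})$. Since at most one element of $\tau(\{a,d,e\})$ lies outside $\{X,Y\}$, at least one of $\phi(d),\phi(e)$ has type in $\{X,Y\}$; call this vertex $\phi(v)$, and let $w\in\{b,c\}$ be its unique neighbour in $\Gamma_1$ (so $w=b$ if $v=e$ and $w=c$ if $v=d$). Relabelling the twin pair if necessary, we may assume $\tau(\phi(v))=X$, and after conjugating all five elements $\phi(a),\ldots,\phi(e)$ by a common element of $\GG(\Gamma_2)$---which preserves all commutation relations---we may further assume $\phi(v)=X$ itself. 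Then $\phi(w)$ commutes with $X$, so $\phi(w)\in C(X)=P_X$; and the centrality of $Y$ in $P_X$ forces $\phi(w)$ to commute with $Y$.

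Since both $X$ and $Y$ lie in $\tau(\{a,d,e\})$, there exists $u\in\{a,d,e\}\setminus\{v\}$ with $\tau(\phi(u))=Y$. The conjugate $\phi(u)$ of $Y$ commutes with $\phi(v)=X$, hence lies in $C(X)=P_X=P_Y$; but the only conjugate of the generator $Y$ in $P_Y=C(Y)$ is $Y$ itself (any other conjugate of $Y$ fails to commute with $Y$), so $\phi(u)=Y$. Combined with the previous paragraph, $\phi(w)$ and $\phi(u)$ commute. However, in $\Gamma_1$ the only edge incident to $w\in\{b,c\}$ is $\{v,w\}$, so $\{u,w\}$ is a non-edge, contradicting the hypothesis that $\phi$ is an induced embedding. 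The main obstacle is the identification of the twin-pair structure specific to $\Gamma_2$; once this is in hand, the centrality argument gives a uniform contradiction across all six triangles.
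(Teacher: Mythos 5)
Your argument is correct, and it takes a genuinely different route from the paper's. The paper proves the lemma by an exhaustive case analysis on the canonical generator $x_d$ underlying $v_d$ (six cases, several collapsed by symmetries of $\Gamma_2$), normalising $w_d=1$ and applying the Centraliser Theorem branch by branch until each case contradicts one of the required commutation or non-commutation relations. You instead isolate the structural reason that analysis succeeds: $\Gamma_2$ has two pairs of adjacent twins, $\{a_1,d\}$ and $\{a_2,e\}$, with equal closed stars; every triangle of $\Gamma_2$ contains one such pair; and for a twin pair $\{X,Y\}$ one has $C(X)=\langle \mathrm{st}(X)\rangle=\langle \mathrm{st}(Y)\rangle=C(Y)$ with both $X$ and $Y$ central there. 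Forcing the image of the triangle $\{a,d,e\}$ of $\Gamma_1$ onto a twin pair then yields a uniform contradiction: the degree-one vertex $w\in\{b,c\}$ attached to $v\in\{d,e\}$ is compelled to commute with the image of a second triangle vertex that it must not commute with. Your version is shorter, explains \emph{why} $\Gamma_2$ works, and would transfer to any graph with the same twin-covering of its triangles; the paper's case analysis is more pedestrian but entirely self-contained. (Your enumeration of the six triangles and of the two twin pairs is accurate.)

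Two steps deserve one more line each. First, your justification that distinct conjugates of a generator never commute --- ``$x^g\in C(x)$ forces $g\in C(x)$'' --- asserts exactly the nontrivial point; the clean argument is that $x^g\in C(x)=\langle x\rangle\times\langle \lk(x)\rangle$ gives $x^g=x^m u$ with $u\in\langle \lk(x)\rangle$, abelianisation gives $m=1$, and comparing lengths of the cyclically reduced conjugate elements $x$ and $xu$ gives $u=1$. Second, the claim that $\tau$ sends edges of $\Gamma_1$ to \emph{edges} of $\Gamma_2$ (rather than merely to pairs of distinct vertices, which is all the preceding sentence delivers) needs the standard fact that a conjugate of $y$ lying in the parabolic $\langle \mathrm{st}(x)\rangle$ forces $y\in \mathrm{st}(x)$; this is in Kim--Koberda and is implicitly what the paper's own Case I uses when it concludes $x_a\in\{a_1,d\}$. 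Neither point affects the correctness of your proof.
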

\begin{proof}
Suppose, for a contradiction, that $\G_1$ is an induced subgraph of $\G_2^e$. 
Then there are vertices $v_a, v_b, v_c, v_d, v_e, v_f$ of $\G_2^e$, such that the map sending $a$ to $v_a$, $b$ to $v_b$ and so on, induces
an embedding of graphs. Therefore, by definition of $\G_2^e$ there exist canonical generators $x_a,x_b,x_c,x_d, x_e,\in \{a_1,a_2,b,c,d,e\}$ and elements 
$w_a,w_b,w_c,w_d,w_e\in \GG(\Gamma_2)$, such that 
\[
 v_a=x_a^{w_a},v_b=x_b^{w_b}, v_c=x_c^{w_c}, v_d=x_d^{w_d}, v_e=x_e^{w_e},
\]
where  the words $w_y^{-1}x_yw_y$, are reduced in $\GG(\G_2)$, 
for $y\in \{a,b,c,d,e,f\}$ and the graph spanned by the set $\{v_a, v_b, v_c, v_d, v_e\}$ in $\G_2^e$ is isomorphic to the graph $\G_1$, i.e. the following commutation relations, and only these, 
hold between the $v_y$:
\[[v_a,v_d], [v_a,v_e], [v_b,v_e], [v_c,v_d], [v_d,v_e].
\]

We perform a case-by-case analysis. 
We repeatedly use the fact that if $Y$ is a subset of the canonical generating
set of a pc group $\GG(\G)$ and $g\in \langle Y\rangle$ then 
$\az_{\G}(g)\subseteq Y$. That is,
if $w$ is a reduced word in $\GG(\G)$ and represents $g$ 
then every letter occurring 
in $w$ belongs to $Y$. In particular, if $x$ is a generator and $v$ is 
a reduced word in $\GG(\G)$  then $x\in \az_{\G}(x^v)$.

Note that the sets of vertices $\{v_a, \dots, v_e\}$ and $\{v_a^w, \dots, v_e^w\}$ in $\G_2^e$, where $w\in \GG(\G_2)$, span isomorphic graphs, in this case both isomorphic
to  $\G_1$. 
Hence, conjugating the vertices of the set $\{v_a, \dots, v_e\}$ by $w_d^{-1}$ we may assume that $w_d=1$.

\paragraph{Case I: $x_d=c$} In this case,  using the Centraliser Theorem, \cite{Serv, DK}, since $[a,d]=1$, it follows that 
$x_a^{w_a}\in C_{\GG(\G_2)}(x_d)=\langle a_1, c, d\rangle$, a free Abelian group, so 
$w_a=1$ and $x_a\in \{ a_1,d\}$.

Similarly, since $[e,d]=[c,d]=1$, it follows that $x_c,x_e\in \{a_1,d\}$
 and $w_c,w_e=1$. It is clear that $x_a, x_e$ and $x_c$ need to be pairwise distinct,
so this is  a contradiction.
 
\paragraph{Case II: $x_d=b$} This case follows from Case I, using the symmetry of the graph $\Gamma_2$ (interchanging $b$ and $c$).

\paragraph{Case III: $x_d=d$} From the Centraliser Theorem again,
$C_{\GG(\G_2)}(d)=\langle a_1,a_2, c,d,e \rangle$,  which has 
 centre  
$\langle a_1,d\rangle$. 
Since $[d,e]=1$, we have $x_e^{w_e}\in \langle a_1,a_2, c,d,e \rangle$,  
and so, as $w_e^{-1}x_ew_e$ is reduced in $\GG(\G_2)$, the 
letters $a_1$ and $d$ do not belong to $\az(w_e)$ and   
$w_e\in \langle a_2,c,e\rangle$. 
This means that $w_e\in C_{\GG(\G_2)}(x_d)$, so we may conjugate again, 
this time 
by $w_e^{-1}$, and assume that $w_e=1$.
As $x_d$ and $x_e$ must be distinct, we must have   
$x_e\in\{a_1,a_2,c,e\}$.  If $x_e=c$, then 
the statement follows from Case I using the symmetry of $\G_2$ 
(interchanging $d$ and $e$). 

Hence we may assume that  $x_e\in\{a_1,a_2,e\}$.

\paragraph{Case III.1: $x_e=a_1$} In this case $x_e\in Z(C_{\GG(\G_2)}(d))$. 
As above, since $[c,d]=1$, it follows that 
$x_c^{w_c}\in C_{\GG(\Gamma_2)}(d)$ and so 
$[v_e,v_c]=[x_e,x_c^{w_c}]=1$, a contradiction. 

\paragraph{Case III.2: $x_e\in\{a_2,e\}$} 
As above, since $[c,d]=[a,d]=1$, it follows that $x_c,x_a\in \{a_1,a_2,c,e\}$. If $x_c=a_1$, then  $[v_c,v_e]=[x_c^{w_c},x_e^{w_e}]=1$, a contradiction.  
It follows that $x_c\in \{a_2,c,e\}$. Since $[c,a]\ne 1$, it follows that 
$x_a\in \{a_2,c,e\}$.   
On the other hand, since $[a,e]=1$, it follows 
 that 
$[v_a,v_e]=[x_a^{w_a},x_e]=1$, so $x_a^{w_a}\in C_{\GG(\G_2)}(\{a_2,e\})
=C_{\GG(\G_2)}(e)=\langle a_1,a_2,e,d,b\rangle$. Thus 
$x_a\in \{a_2,e\}$  and $w_a=1$. However, this means that
$C_{\GG(\G_2)}(v_a)=C_{\GG(\G_2)}(\{a_2,e\})=C_{\GG(\G_2)}(v_e)$,
 contrary to the hypotheses on the $v_y$'s.
 
\paragraph{Case IV: $x_d=e$} Follows from Case III using symmetry of the graph $\Gamma_2$ (interchanging $e$ and $d$).

\paragraph{Case V: $x_d=a_1$}
This follows from Case III using the symmetry of $\Gamma_2$ (interchanging
$a_1$ and $d$). 

\paragraph{Case VI: $x_d=a_2$} Follows from Case IV using symmetry of the graph $\Gamma_2$.

\end{proof}

\section{A counterexample to the Weakly Chordal Conjecture}
\label{sec:wcc} 

In this section we give a counterexample to Conjecture \ref{conj:wc}. 
\begin{rem}
We note that the graph $\Gamma$ is weakly chordal if and only if  the complement 
graph $\overline{\Gamma}$ is weakly chordal. 
\end{rem}

Let $\Gamma_1$ be the graph $C_5$ (the 5-cycle)and $\Gamma_2$ the graph 
$P_7$ (the path of length 7), as shown in Figure \ref{fig:3}.  
 Let $\GG(\overline C_5)$ and $\GG(\overline P_7)$ be partially commutative groups with the underlying non-commutation graphs $\Gamma_1$ and $\Gamma_2$, respectively. 
That is 
\begin{equation}\label{eq:c5}
\GG(\overline C_5)=\langle a,b,c,d,e\mid [a,c],[a,d],[b,d],[b,e],[c,e]\rangle
\end{equation}
and
\begin{align*}
\GG(\overline P_7)=\langle a,b,c_1,c_2,d_1,d_2, e_1,e_2 
\mid &[a,c_1],[a,c_2],[a,d_1],[a,d_2], [a,e_2],\\
& [b,c_1],[b,d_1],[b,d_2], [b, e_1], [b,e_2],\\
& [c_1,c_2],[c_1,d_2], [c_1,e_1], [c_1,e_2],\\ 
& [c_2,d_1], [c_2,e_1],[c_2,e_2],\\ 
&[d_1,d_2], [d_1,e_2], [d_2,e_1],\\ 
& [e_1,e_2] \rangle.
\end{align*}
\begin{figure}
\psfrag{a}{$a$}
\psfrag{c1}{$c_2$}
\psfrag{c2}{$c_1$}
\psfrag{b}{$b$}
\psfrag{c}{$c$}
\psfrag{d}{$d$}
\psfrag{e}{$e$}
\psfrag{d1}{$d_1$}
\psfrag{d2}{$d_2$}
\psfrag{e1}{$e_1$}
\psfrag{e2}{$e_2$}
\begin{center}
\begin{subfigure}[b]{.45\columnwidth}
\begin{center}
\includegraphics[scale=0.25]{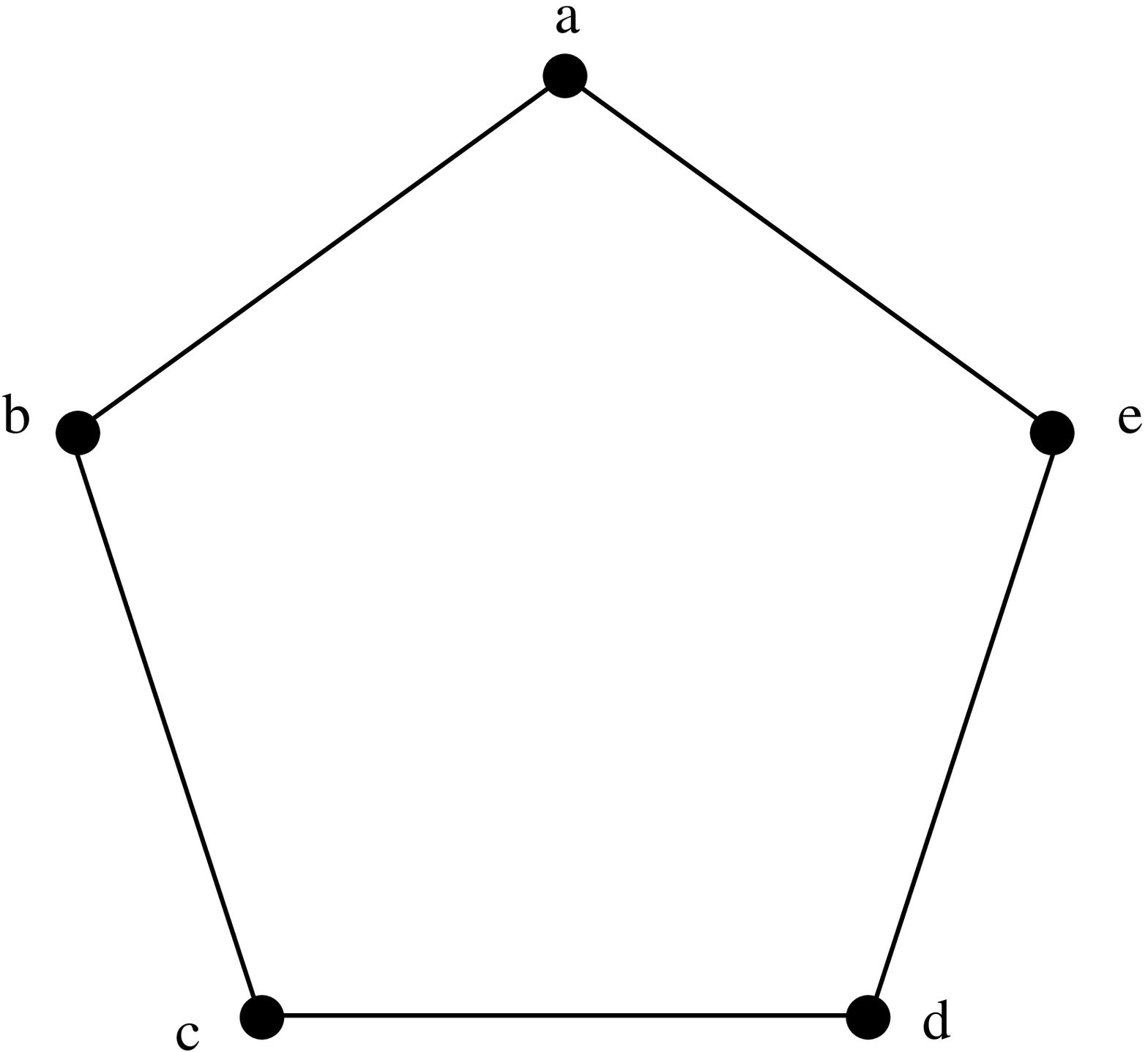}
\caption{$\Gamma_1=C_5$}
\label{fig:3a}
\end{center}
\end{subfigure}
\begin{subfigure}[b]{.45\columnwidth}
\begin{center}
\includegraphics[scale=0.25]{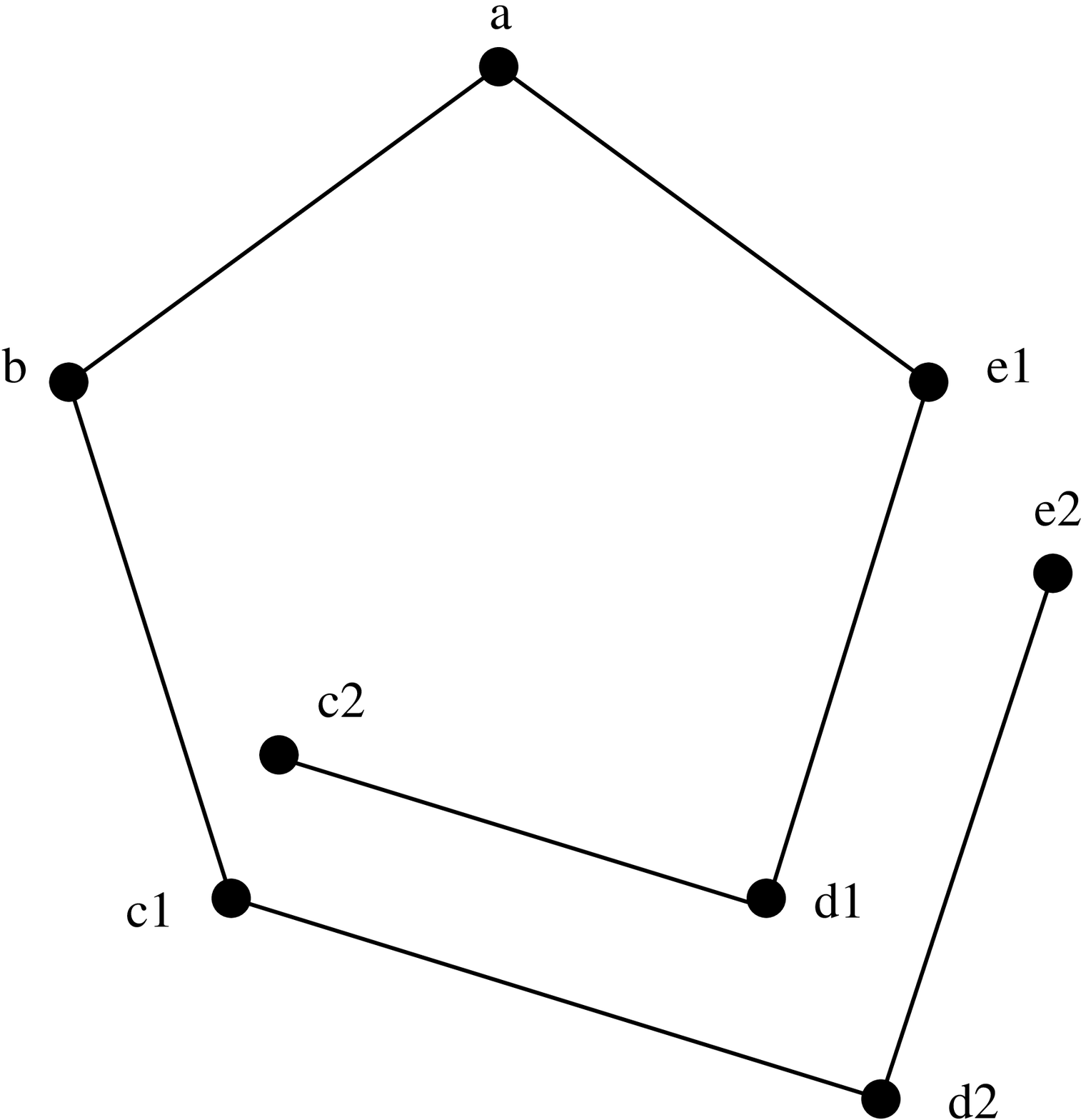}
\end{center}
\caption{$\Gamma_2 =P_7$}
\label{fig:3b}
\end{subfigure}
\end{center}
\caption{}\label{fig:3}
\end{figure}

\begin{rem} \label{rem:cp}
Observe that any for $n\ge 0$ the path graph $P_n$, and thus also its
complement $\overline P_n$, are weakly chordal. 
Indeed, $P_n$ is $C_m$ free, for all $m,n\ge 1$, and for $n \ge 3$, the 
graph $\overline P_n$ has $2$ vertices of degree $n-1$ and 
$n-1$ vertices of degree $n-2$. Therefore, for $n\ge 5$,  $\overline P_n$ 
contains no induced
$C_m$; and so its complement  $P_n$ contains no $\overline C_m$, $m\ge 1$. 
Hence $P_n$ is weakly chordal, for all integers
$n\ge 0$.
Furthermore, the complement of $C_5$ is again $C_5$.
 Moreover, $\GG(\overline P_n)\le\GG(\overline P_m)$, 
as $P_n\le P_m$, for all $n \le m$, 
and $\GG(C_n)\le \GG(C_5)$, for all $n\ge 5$, see \cite[Theorem 11]{KK}. 

\end{rem}

Define a map
\[
\varphi =\left\{ 
\begin{array}{l}
a\mapsto a;\\
b\mapsto b;\\
c\mapsto c_1c_2;\\
d\mapsto d_1d_2;\\
e\mapsto e_1e_2.
\end{array}
\right.
\]

\begin{prop}\label{prop:57}
The map $\varphi$ defined above induces an embedding of $\GG(\overline C_5)$ into $\GG(\overline P_7)$. 
\end{prop}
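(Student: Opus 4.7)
The plan is to follow the strategy of Lemma~\ref{lem:1}: first verify that $\varphi$ extends to a well-defined homomorphism, then establish injectivity by assuming the kernel is non-trivial and deriving a contradiction via van Kampen diagram analysis in $\GG(\overline{P_7})$.

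The homomorphism check is routine: each of the five defining commutators $[a,c]$, $[a,d]$, $[b,d]$, $[b,e]$, $[c,e]$ of $\GG(\overline{C_5})$ maps to the identity because $a$ commutes with each $c_i$ and each $d_i$, $b$ commutes with each $d_i$ and each $e_i$, and every pair $\{c_i,e_j\}$ commutes (no edge of $P_7$ joins a $c$-vertex to an $e$-vertex). Hence $\varphi$ extends to a homomorphism.

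For injectivity, suppose $\ker\varphi$ contains a non-trivial element, and let $w$ be a minimal-length word representing one. Using retractions of $\GG(\overline{P_7})$ onto canonical parabolic subgroups (killing all $c_i,d_i,e_i$ or only $c_i,e_i$, respectively), one first checks that $\varphi$ is injective on $\langle a,b\rangle\cong F_2$ and on $\langle a,b,d\rangle\cong\mathbb{Z}\times F_2$; hence $w$ must contain at least one of $c,d,e$. The involution of $\overline{C_5}$ swapping $a\leftrightarrow b$ and $c\leftrightarrow e$ (fixing $d$) is intertwined via $\varphi$ with the reflection of $P_7$ exchanging $a\leftrightarrow b$, $c_1\leftrightarrow e_2$, $e_1\leftrightarrow c_2$ and $d_1\leftrightarrow d_2$; so (after possibly applying this symmetry) we may assume $c$ occurs in $w$, and therefore $c_1, c_2$ occur in $\varphi(w)$ only as adjacent pairs $(c_1c_2)^{\pm 1}$.

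Now consider a minimal van Kampen diagram $\mathcal{D}$ for $\varphi(w)=1$ and invoke Lemma~\ref{lem:bands} to obtain an outside band $L_{x_1}$ with $x_1=c_1^{\pm 1}$. This band induces a decomposition
\[\varphi(w)\doteq w_1(c_1c_2)^{\epsilon}w_2(c_1c_2)^{-\epsilon}w_3,\]
with $w(L_{x_1})=c_2^{\epsilon}w_2 c_2^{-\epsilon}$. Since $L_{x_1}$ is outside, $c_1\notin\az(w_2)$, and by the pairing of $c_1$'s with $c_2$'s in the original writing of $\varphi(w)$, also $c_2\notin\az(w_2)$. Hence $w_2=\varphi(u)$ for a subword $u$ of $w$ lying in $\langle a,b,d,e\rangle$, and $cuc^{-1}$ is a reduced subword of $w$ in $\GG(\overline{C_5})$. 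The latter forces $u\notin C_{\GG(\overline{C_5})}(c)\cap\langle a,b,d,e\rangle=\langle a,e\rangle$, so $u$ must contain either $b$ or $d$. A case analysis analogous to that in Lemma~\ref{lem:1}, using Lemma~\ref{lem:bands} repeatedly, then produces inside $w(L_{x_1})$ a further nested outside band $L_{y_1}$ whose label is one of the split generators $d_i^{\pm 1}$ or $e_i^{\pm 1}$ and whose boundary word is a proper subword of $w(L_{x_1})$. Iterating this construction yields an infinite strictly decreasing sequence of nested boundary subwords of $\varphi(w)$, contradicting the finiteness of $\varphi(w)$.

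The main obstacle is the iterative case analysis sketched in the previous paragraph. Because three generators $c,d,e$ are split (rather than a single generator as in Lemma~\ref{lem:1}) and the non-commutation graph $P_7$ has greater diameter, at each step several subcases must be tracked depending on which split letters appear in the current boundary word and which non-commuting partners along $P_7$ they bring in. The general mechanism---an outside band forces a parabolic constraint on its boundary word, which in turn forces a further nested outside band of a different split letter---is unchanged from Lemma~\ref{lem:1}, but the bookkeeping is more delicate.
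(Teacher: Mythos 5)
Your setup (homomorphism check, reduction via parabolic retractions to the case where a split generator occurs in $w$, the symmetry of $P_7$, and the outside-band decomposition $\varphi(w)\doteq w_1(c_1c_2)^{\epsilon}w_2(c_1c_2)^{-\epsilon}w_3$ with $c_1,c_2\notin\az(w_2)$) is sound and matches the paper's strategy. The gap is in the final step, which is where all the content lies: you assert that the iteration ``produces a further nested outside band whose label is one of the split generators $d_i^{\pm1}$ or $e_i^{\pm1}$'' and that iterating gives an \emph{infinite} strictly decreasing chain of boundary subwords. That is not what happens here, and the inductive step as you state it fails after a couple of rounds. Each outside band excludes its own letter of $\overline{C_5}$ from the alphabet of its boundary subword, and each vertex of $C_5$ has exactly two non-neighbours; so the letter forced into the next boundary subword by reducedness of $w$ is very quickly one of the \emph{unsplit} generators $a$ or $b$ (e.g.\ from $c$ one is forced to $b$ or $d$, from $d$ to $e$, from $e$ to $a$, from $a$ to $b$), and after at most five nesting levels every generator has been excluded, so the innermost boundary word is forced into a cyclic or trivial parabolic and one gets a direct contradiction with reducedness of $w$. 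The paper's proof is exactly this \emph{finite} descent, carried out as an explicit case analysis of length four or five in each branch; the infinite-descent mechanism you invoke is the one used for Lemma~\ref{lem:1}, where the special structure of that $\Gamma_2$ allows a $b$--$c$ alternation to recur forever, and it does not transfer to $\overline{P_7}$.

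To repair the argument, replace the claim of an infinite chain by the following termination principle: at each level the current outside band $L_u$ has boundary word corresponding to a subword $u'vu'^{-1}$ of $w$ ($u'\in\{a,b,c,d,e\}^{\pm1}$), reducedness of $w$ forces $v$ to contain one of the two non-neighbours of $u'$ in $C_5$, at least one of which is already excluded at that level, and the set of excluded letters grows strictly; conclude with the explicit contradiction once $v$ is forced into $\langle a\rangle$, $\langle b\rangle$ or $\langle c_1,c_2\rangle$. This is precisely the case analysis the paper performs (starting from $e_1$, then $d_1$, then $c_1$ according to which split letters occur in $w$), and it cannot be omitted: it is the proof.
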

The following corollary follows immediately from Remark \ref{rem:cp} and 
Proposition \ref{prop:57}. 
\begin{cor}
For all $n\ge 5$ and $m\ge 7$, the group 
$\GG(C_n)$ embeds into $\GG(C_5)=\GG(\overline C_5)$,  
the group $\GG(\overline P_7)$ embeds into  $\GG(\overline P_m)$ 
and the group $\GG(C_n)$ embeds into $\GG(\overline P_m)$.
\end{cor}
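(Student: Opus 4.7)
The plan is to observe that the corollary follows by chaining together the three ingredients collected just before it: Remark~\ref{rem:cp}, the general principle that an induced subgraph of a defining graph gives a subgroup inclusion of the associated pc groups, and Proposition~\ref{prop:57}. I will treat the three embeddings separately and in order, then combine them at the end.

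First, for $\GG(C_n)\hookrightarrow\GG(C_5)$, I would simply quote the statement from Remark~\ref{rem:cp}, which in turn cites \cite[Theorem 11]{KK}: since $C_5$ is triangle-free, one can realise any $C_n$ with $n\ge 5$ as a subgraph of the extension graph $C_5^{e}$ (by walking once around the vertices of $C_5$ and traversing conjugates), which via the Kim--Koberda construction yields $\GG(C_n)\le\GG(C_5)$. Since $\overline C_5$ is isomorphic to $C_5$ (the other piece of Remark~\ref{rem:cp}), one may equivalently write the target as $\GG(\overline C_5)$.

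Next, for $\GG(\overline P_7)\hookrightarrow\GG(\overline P_m)$ with $m\ge 7$, the key observation is purely graph-theoretic: taking a subpath on the first $8$ vertices exhibits $P_7$ as an induced subgraph of $P_m$, and taking complements preserves induced subgraphs, so $\overline P_7\le\overline P_m$. Since induced subgraphs of commutation graphs give rise to embeddings of the corresponding partially commutative groups (by the remark immediately after Question~\ref{quest1}), this yields the desired inclusion $\GG(\overline P_7)\le\GG(\overline P_m)$.

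Finally, for the composite embedding $\GG(C_n)\hookrightarrow\GG(\overline P_m)$, I would simply chain the preceding two inclusions with Proposition~\ref{prop:57}:
\[
\GG(C_n)\ \hookrightarrow\ \GG(C_5)\ =\ \GG(\overline C_5)\ \stackrel{\varphi}{\hookrightarrow}\ \GG(\overline P_7)\ \hookrightarrow\ \GG(\overline P_m),
\]
valid for all $n\ge 5$ and $m\ge 7$. There is essentially no obstacle here; each arrow has already been established. The only point that needs a brief sanity check is the identification $\GG(C_5)=\GG(\overline C_5)$, which is guaranteed because $C_5$ is self-complementary, so there is nothing further to prove beyond citing the ingredients.
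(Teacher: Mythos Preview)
Your proposal is correct and follows exactly the approach the paper intends: the corollary is stated as an immediate consequence of Remark~\ref{rem:cp} (which already records that $\GG(C_n)\le\GG(C_5)$ for $n\ge 5$, that $C_5$ is self-complementary, and that $\GG(\overline P_7)\le\GG(\overline P_m)$ for $m\ge 7$) together with Proposition~\ref{prop:57}, and you have simply unpacked these citations and composed the resulting embeddings.
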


Proposition \ref{prop:57} may also be used to 
show that  many surface groups embed into
$\GG(\overline P_m)$, for $m\ge 7$.  
\begin{cor}\label{cor:surfaces}
The fundamental group of a compact  surface of even Euler characteristic
at most $-2$ embeds into $\GG(C_5)\le \GG(\overline P_m)$, for all $m\ge 7$. 
\end{cor}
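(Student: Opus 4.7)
The plan is to chain together three embeddings: an embedding of the surface group into $\GG(C_5)$, the embedding from Proposition \ref{prop:57}, and the inclusion given by Remark \ref{rem:cp}. The third ingredient is immediate: by Remark \ref{rem:cp}, $C_5$ is self-complementary so $\GG(C_5)=\GG(\overline C_5)$, and for $m\ge 7$ we have $P_7\le P_m$, whence $\GG(\overline P_7)\le \GG(\overline P_m)$. Proposition \ref{prop:57} supplies $\GG(\overline C_5)\hookrightarrow \GG(\overline P_7)$, so the composite gives
\[
\GG(C_5)\;\hookrightarrow\;\GG(\overline P_7)\;\le\;\GG(\overline P_m).
\]
What remains is therefore the assertion that $\pi_1(S)$ embeds into $\GG(C_5)$ for every compact surface $S$ of even Euler characteristic $\chi(S)\le -2$.

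For closed orientable surfaces, the hypothesis forces $\chi(S)=2-2g$ with $g\ge 2$, and the embedding $\pi_1(\Sigma_g)\hookrightarrow \GG(C_5)$ is precisely the result of \cite{R} recorded in the introduction. For a compact orientable surface with nonempty boundary, $\pi_1(S)$ is free, and the condition $\chi(S)\le -2$ forces its rank to be at least $3$; any such free group embeds into $\GG(C_5)$ via any two non-commuting generators (for instance $a$ and $b$ in the presentation \eqref{eq:c5}). After composing with the chain above we conclude in these cases.

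The main obstacle is the closed non-orientable case: the hypotheses include $N_k$ with $k$ even and $k\ge 4$, so $\chi(N_k)=2-k\le -2$ is even. Here $\pi_1(N_k)$ contains $\pi_1(\Sigma_{k-1})$ as an index-two subgroup, but this alone does not embed $\pi_1(N_k)$ itself into $\GG(C_5)$. To handle this I would either (i) invoke an extension of \cite{R} in the subsequent literature giving such embeddings for non-orientable surfaces of even characteristic, or (ii) construct the embedding explicitly by labeling the edges of a polygonal presentation of $N_k$ (the one-relator presentation with relator $a_1^2\cdots a_k^2$) by vertices of $C_5$ so that adjacent labels on the polygon are adjacent in $C_5$, and verifying injectivity by a van Kampen/disc-diagram argument in the spirit of the proof of Proposition \ref{prop:57}. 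Once this is in place, the same composition yields $\pi_1(N_k)\hookrightarrow \GG(C_5)\le \GG(\overline P_m)$.
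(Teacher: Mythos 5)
Your reduction to the single claim ``$\pi_1(S)\hookrightarrow\GG(C_5)$'' is correct, and the orientable cases are fine (the closed orientable case is exactly the observation of \cite{R}; the bounded case is a free group and embeds in $\langle a,b\rangle\le\GG(\overline C_5)$). But the closed non-orientable case is a genuine gap, and you have correctly located it without closing it: option (i) is an appeal to unspecified ``subsequent literature'' with no citation, and option (ii) is a construction you do not carry out and whose injectivity verification would be a substantial piece of work in its own right. As written, the proof of the corollary is incomplete precisely at the one point where something nontrivial is needed.

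The paper closes this gap with an ingredient you overlooked and which is already in its bibliography: Crisp and Wiest \cite{CW} prove that the fundamental groups of \emph{both} compact closed surfaces of Euler characteristic $-2$ --- the orientable genus-$2$ surface \emph{and} the non-orientable surface $N_4$ --- embed into $\GG(C_5)$. (Your option (ii) would essentially amount to reproving their result.) Given that, covering space theory finishes everything: a degree-$d$ cover multiplies Euler characteristic by $d$, so every closed non-orientable surface of even Euler characteristic $2-k<-2$ (i.e.\ $N_k$ with $k$ even, $k>4$) finitely covers $N_4$ --- the covers are built as in \cite[Example 2.6]{M}, adapted to the non-orientable setting --- and hence $\pi_1(N_k)$ sits inside $\pi_1(N_4)\hookrightarrow\GG(C_5)$ as a finite-index subgroup. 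Note that the parity hypothesis in the corollary is exactly what makes this work: $\chi(N_k)=2-k$ must be an integer multiple of $\chi(N_4)=-2$. Your observation that the orientation double cover of $N_k$ is $\Sigma_{k-1}$ is true but, as you say, goes the wrong way; the covers you need are of $N_4$ \emph{by} $N_k$, not covers \emph{of} $N_k$. Replace your options (i)/(ii) with the citation of \cite{CW} for $N_4$ plus this covering argument and the proof is complete.
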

\begin{proof}
The compact orientable and non-orientable surfaces 
of Euler characteristic $-2$ are shown by Crisp and Wiest
\cite{CW} to embed in $\GG(C_5)$. As observed in \cite{R} the orientable
surface of Euler characteristic $-2$ is finitely covered by 
all  orientable surfaces
of smaller Euler characteristic; so the latter all have fundamental groups
which embed in $\GG(C_5)$. The construction of these finite covers appears 
in \cite[Example 2.6]{M} and a similar construction gives a finite
 cover of the compact  non-orientable surface of Euler characteristic $-2$ by 
any compact non-orientable surface of even Euler characteristic, less than
$-2$.   
\end{proof}

(Using Stallings' definition of the genus of a closed surface $S$,
namely $\genus(S)=(2-\chi(S))/2$, the Corollary says that all surface groups
of integer genus $g$, such that  $g\ge 2$, embed in 
$\GG(\overline P_m)$, $m\ge 7$.)

\begin{proof}[Proof of Proposition \ref{prop:57}]
As $\varphi$ maps each of the relators of the presentation \eqref{eq:c5} of 
$\GG(\overline C_5)$ to the identity of $\GG(\overline P_7)$, it is immediate that $\varphi$ is a homomorphism. We show that $\varphi$ is injective. 

Let $w$ be a reduced, non-trivial, word in $\GG(\overline C_5)$, 
suppose that $\varphi(w)=1$, and assume that $w$ is of minimal length,
 among all such words.  Let $\mathcal{D}$ be a minimal van Kampen 
diagram for $\varphi(w)$. As in the proof of Lemma \ref{lem:1}, 
$\varphi(w)$ must contain a letter from 
$\{c_i^{\pm 1}, d_i^{\pm 1}, e_i^{\pm 1}\}$. 
Suppose first that $\varphi(w)$ contains an occurrence of $e_i^{\pm 1}$. 
Then  $\varphi(w)$ contains $e_1^{\pm 1}$ and, 
as in the proof of Lemma \ref{lem:1},
without loss of generality we may assume there exists a band 
$L_v$ such that  $v=e_1$
 and $w(L_v)=e_2w_2e_2^{-1}$, where $w_2$ contains no occurrences 
of $e_i^{\pm 1}$, $i=1,2$. 
As $w$ is reduced in $\GG(\G_1)$ it follows that $w(L_v)$ must contain a letter from
$\{a^{\pm 1}, d_i^{\pm 1}\}$. 

In the case where $w(L_v)$ contains no letter $d_i^{\pm 1}$, the word 
$w_2$ contains an occurrence of $a^{\pm 1}$ and,  as $[a,e_1]\neq 1$,  
 no band $L_a$ can cross $L_v$, so a  band 
with one end at an occurrence of $a^{\pm 1}$ 
in $w_2$ must have both ends in $w_2$. Hence there must 
be a band $L_x$ such that $x=a^{\pm 1}$, $w(L_x)$ is a subword of $w_2$ and 
no letter $a^{\pm 1}$ occurs in $w(L_x)$. As $w$ is reduced,  
$\az(w(L_x))$ must contain a letter from $\{b,e_i\}$; but $w(L_x)$ is
 a subword of $w(L_v)$, so this implies $b^{\pm 1}$ must occur in $w(L_x)$. 
Now we find a band $L_y$, where $y=b^{\pm 1}$, $w(L_y)$ is a subword 
of $w(L_x)$ and contains no occurrence of $b^{\pm 1}$. We are forced
to conclude that $w(L_y)\in \langle c_1,c_2\rangle$. As $w$ is 
reduced in $\GG(\G_1)$ the word $w(L_y)$ contains 
an occurrence of $c_i^{\pm 1}$  and, as all such letters occur in
subwords $(c_1c_2)^{\pm 1}$, it follows that $w(L_y)$ contains an 
occurrence of $c_2^{\pm 1}$. As $[b,c_2]\neq 1$,  there is a band $L_z$, where 
$z=c_2^{\pm 1}$, with both
ends in $w(L_y)$.  In this case $w(L_y)$ contains the ends $c_2^\epsilon$ and 
$c_2^{-\epsilon}$ of $L_z$, and, as  $w(L_y)\in \langle c_1,c_2\rangle$, 
 $w$ cannot be reduced, a contradiction

Hence we assume that $w(L_v)$ contains an occurrence $d_i^{\pm 1}$. As 
$[d_1,e_1]\neq 1$,  
 we may choose a band $L_x$, such that such that $x=d_1^{\pm 1}$, and  
$w(L_x)$ is a subword of $w(L_v)$ which either contains no 
letter $d_i^{\pm 1}$, if $x=d_1^{-1}$, or factors as 
$w(L_x)=d_2w_3d_2^{-1}$, 
where  no letter $d_i^{\pm 1}$ occurs in
$w_3$, if $x=d_1$. 
Next, we may choose a band $L_y$, such that $y=c_1^{\pm 1}$, 
$w(L_y)$ is a subword of $w(L_x)$ and contains no letter $c_i^{\pm 1}$,
except possibly the first and last (which may be $c_2$ and $c_2^{-1}$). 
Finally there must be a band $L_z$, with $z=b^{\pm 1}$, such that 
$w(L_z)$ is a
subword of $w(L_y)$ and there is no occurrence of $b^{\pm 1}$ in $w(L_z)$. 
This forces $w(L_z)$ to be an element of $\langle a \rangle$, which 
cannot be trivial as the ends of $L_z$ must be separated by at least one
letter $a^{\pm 1}$, if  $w$ is reduced.  However, there is then a band
with ends $a^{\epsilon}$ and $a^{-\epsilon}$, both lying in $w(L_z)$; and so $w$ is not reduced, a contradiction. 

If $e_i\notin \az(w)$, $i=1,2$, then begin  above with 
$d_i$ instead of $e_i$ and the same  argument gives the 
result. The case where both $e_i, d_i\notin \az(w)$ follows similarly.  
\end{proof}
\section{Subgroups of $C_4$ and $P_3$ free graphs}\label{sec:Degc}
Let us call 
graphs which are $C_4$ and $P_3$ free \emph{thin-chordal graphs}. 
The following theorem is a generalisation of  
part of the result of Droms \cite[Theorem]{D}. 

\begin{thm}\label{thm:tcegc}
The extension graph conjecture holds for finite, thin-chordal graphs, 
in the following strong form. Let $\G$ be a finite thin-chordal graph.
Then $H$ is a subgroup of $\GG(\G)$ if and only if $H\cong \GG(\G')$, 
 for some induced subgraph $\G'$ of $\G^e$.   
\end{thm}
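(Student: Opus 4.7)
The reverse direction is immediate from the Kim--Koberda result that $\GG(\G') \le \GG(\G)$ whenever $\G' \le \G^e$. For the forward direction, my plan is to induct on $|V(\G)|$ using the structural dichotomy for thin-chordal (``trivially perfect'') graphs: any such graph on at least two vertices either is disconnected or contains a universal vertex. The base cases $|V(\G)|\le 1$ are immediate, and the inductive step splits into these two cases.

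In the connected case, let $v$ be a universal vertex and $\G_0 = \G \setminus \{v\}$. Then $\GG(\G) \cong \GG(\G_0) \times \langle v\rangle$ with $\langle v\rangle \cong \mathbb{Z}$ central, so $v$ is its own only conjugate and commutes with every conjugate of every generator; consequently $\G^e$ is obtained from $\G_0^e$ by adjoining a universal vertex. Given $H \le \GG(\G)$, projecting to $\langle v\rangle$ either yields trivial image (so $H \le \GG(\G_0)$ and induction applies directly) or yields $n\mathbb{Z}$ with $n \ge 1$, in which case centrality of $v$ together with torsion-freeness of $\GG(\G_0)$ forces the splitting $H = (H \cap \GG(\G_0)) \times \mathbb{Z}$. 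Applying induction to $H \cap \GG(\G_0) \cong \GG(\L_0)$ with $\L_0 \le \G_0^e$ gives $H \cong \GG(\L_0) \times \mathbb{Z} \cong \GG(\L)$, where $\L \le \G^e$ is obtained from $\L_0$ by adjoining a universal vertex.

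In the disconnected case $\G = \G_1 \sqcup \G_2$, we have $\GG(\G) = \GG(\G_1) \ast \GG(\G_2)$. Kurosh's subgroup theorem expresses any $H \le \GG(\G)$ as a free product of a free group $F$ with conjugates of subgroups $A_i \le \GG(\G_1)$ and $B_j \le \GG(\G_2)$. Induction gives $A_i \cong \GG(\L_i)$ and $B_j \cong \GG(\L'_j)$ with $\L_i \le \G_1^e$ and $\L'_j \le \G_2^e$; since the free product of partially commutative groups is pc on the disjoint union of defining graphs, $H \cong \GG(\L)$ where $\L$ is the disjoint union of the $\L_i$, the $\L'_j$, and $r$ isolated vertices ($r$ the rank of $F$). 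The principal obstacle is to embed $\L$ as an induced subgraph of $\G^e$. I would use the centralizer lemma for free products, $C_{A \ast B}(a) = C_A(a)$ for $a \in A \setminus \{1\}$, to show that conjugating a set $U \subseteq V(\G_1^e)$ by a non-trivial element $t \in \GG(\G_2)$ yields a set $tUt^{-1} \subseteq V(\G^e)$ disjoint from $U$ and whose elements commute with no element of $U$. Iterating with conjugators $t^n$ (and symmetrically using a non-trivial element of $\GG(\G_1)$ to spread out the $\L'_j$) produces pairwise disjoint, commutation-isolated copies of each $\L_i$ and $\L'_j$ in $\G^e$, while conjugates of individual generators furnish the isolated vertices for $F$; these pieces assemble to the required induced embedding $\L \le \G^e$.
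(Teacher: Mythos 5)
Your overall architecture (induction on $|V(\G)|$, Kurosh in the disconnected case, a universal vertex in the connected case, and Kim--Koberda for the reverse implication) is the same as the paper's. Your disconnected case is a legitimate variant: where the paper simply invokes Kim--Koberda's Lemma 26(2) (that $\G^e$ is a countable disjoint union of copies of the $\G_i^e$) to find room for all the pieces, you re-derive the needed room directly from the centralizer theorem for free products. That plan works, provided you record that the Kurosh decomposition may involve countably many conjugates and a free group of countably infinite rank, and that the isolated vertices for the free factor must be taken with pairwise distinct conjugating exponents (conjugating a whole set by the same $t^n$ preserves its internal commutation).

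The connected case, however, has a genuine gap. You set up the dichotomy according to the image of the projection $p\colon H\to\langle v\rangle$ and assert that if $p(H)=n\mathbb{Z}$, $n\ge 1$, then centrality of $v$ and torsion-freeness force $H=(H\cap\GG(\G_0))\times\mathbb{Z}$. This is false. Let $\G$ be the star with centre $v$ and leaves $x,y$, so $\GG(\G)=F(x,y)\times\langle v\rangle$, and let $H=\langle x,\,yv\rangle$. Then $p(H)=\mathbb{Z}$, yet $H\cong F(x,y)$ is free of rank $2$ (in particular has trivial centre), while $H\cap F(x,y)=\ker(p|_H)$ is free of infinite rank; so $H$ is not $(H\cap\GG(\G_0))\times\mathbb{Z}$. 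In general one only gets $H=(H\cap\GG(\G_0))\rtimes\langle h_0v^n\rangle$ with $h_0\in\GG(\G_0)$ acting by conjugation, which need not be trivial. The correct dichotomy, which is the one the paper imports from Droms, is governed by the intersection $H\cap\langle v\rangle$ rather than the projection: if $H\cap\langle v\rangle=1$ then the other projection $q$ embeds $H$ into $\GG(\G_0)$ (note $H$ is then only isomorphic to, not contained in, a subgroup of $\GG(\G_0)$, which suffices for the induction); if $H\cap\langle v\rangle=\langle v^d\rangle\ne 1$ then $H$ is a central extension of $q(H)$ by $\mathbb{Z}$, and splitting this extension as a direct product genuinely requires more than centrality and torsion-freeness --- one uses that $q(H)$ is itself a pc group by induction, so that the mod-$d$ character classifying the extension lifts to an integral one. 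You need to repair this step, either by reproducing Droms' argument or by citing it as the paper does.
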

\begin{proof}
Let  $\G$ be a  thin-chordal graph. If $H\cong \GG(\G')$, 
 for an induced subgraph $\G'$ of $\G^e$, then it follows   
from \cite[Theorem 2]{KK} that $H$ is a subgroup 
of $\GG(\G)$. To prove the converse, we use induction on the number of vertices of $\G$.  
If $\G$ is not connected then $\GG(\G)$ is a free
product of partially commutative groups of thin-chordal graphs, each
of which has fewer vertices than $\G$. Hence the result holds for 
each factor, by induction. The result for $\GG(\G)$ follows from the 
Kurosh subgroup theorem. 

To see this, assume that the connected components
of $\G$ are $\G_1,\ldots, \G_n$ and let $\GG_i=\GG(\G_i)$. From
\cite[Lemma 26 (2)]{KK}, it follows that $\G^e$ is the disjoint union
of countably many copies of $\G_i^e$, $i=1,\ldots ,n$. That
is 
\[
\G^e=\coprod_{i=1}^n\left(\coprod_{j\in\NN}\G_{i,j}^e\right),
\]
where $\G_{i,j}^e=\G_i^e$, for all $j\in \NN$ and $i=1,\ldots, n$. 
Let $D_i$ be a set of representatives of double cosets $Hx\GG_i$ 
of $H$ and $\GG_i$ in $\GG(\G)$. Then, from the Kurosh subgroup theorem,
\[H\cong \FF\ast \ast_{i=1}^n\left(\ast_{d\in D_i}H\cap d\GG_id^{-1}\right),
\]
where $\FF$ is a free group of finite or countably infinite rank.

By induction, we have 
\[H\cap d\GG_id^{-1}\cong d^{-1}Hd\cap \GG_i\cong \GG(\Lambda_{i,d}),\]
where $\L_{i,d}\le \G_i^e$, for all $d\in D_i$, $i=1,\ldots, n$. 
Let $X_0$ be a free generating set for $\FF$. Then, for each $i$, there
exists a bijection $\a_i:X_0\coprod D_i\maps \NN$. Hence we 
have an embedding of graphs, 
\[X_0\coprod \coprod_{d\in D_i}\L_{i,d}\le \coprod_{j\in \NN}\G_{i,j}^e,\]
where $x\in X_0$ maps to a vertex of $\G_{i,\a_i(x)}$ and 
$\L_{i,d}$ is embedded in $\G_{i,\a(d)}^e=\G_i^e$, for all $d\in D_i$. 
Hence 
\[\FF\ast\left(\ast_{d\in D_i} H\cap d\GG_id^{-1}\right)
\cong \GG(X_0)\ast\left(\ast_{d\in D_i} \GG(\L_{i,d})\right)
\cong \GG\left(X_0\coprod\coprod_{d\in D_i}\L_{i,d}\right),
\]
where 
\[
X_0\coprod\coprod_{d\in D_i}\L_{i,d}
\le \coprod_{j\in \NN}\G_{i,j}^e.
\]
Therefore 
\[H\cong \GG(X_0)\ast\left(\ast_{i=1}^n\left(\ast_{d\in D_i} \GG(\L_{i,d})\right)\right)
\cong \GG\left(X_0\coprod\left(\coprod_{i=1}^n\coprod_{d\in D_i}\L_{i,d}\right)\right),
\]
where 
\[
X_0\coprod\left(\coprod_{i=1}^n\coprod_{d\in D_i}\L_{i,d}\right)
\le\coprod_{i=1}^n \left(X_0\coprod\coprod_{d\in D_i}\L_{i,d}\right)
\le \coprod_{i=1}^n\coprod_{j\in \NN}\G_{i,j}^e= \G^e.
\]

Suppose then that $\G$ is connected and that 
the result holds for all graphs of fewer vertices, connected or not.  
Let $H$ be a subgroup of $\GG(\G)$. Then, as shown in   \cite[Lemma]{D},
$\G$ has a vertex $z$ which is connected to every other vertex of $\G$. 
Let $V(\G)=Y\cup \{z\}$ and let $\L$ be the induced subgraph of $\G$ with
vertices $Y$.  Then $\L$ is a thin-chordal graph and, from 
\cite[Lemma 26 (1)]{KK}, $\G^e=\{z\}\ast\L^e$, that is, the graph formed from
$\L^e$ by adjoining a new vertex $z$ connected to every other vertex. 
It is shown in \cite{D} that either $H\le \GG(\L)$ or 
$H\cong \langle z^d\rangle \times H'$, where $H'\le \GG(\L)$, and $d$ is
 a positive integer. By induction on the number of vertices, $H$ in the 
first case, or $H'$ in the second case,  is 
a partially commutative group 
isomorphic to $\GG(\L')$, for some induced  subgraph $\L'$ of $\L^e$. 
As $\L^e$ is a full subgraph of $\G^e$ this completes the proof in the 
case $H\le \GG(\L)$. In the second case $H\cong \langle z^d\rangle \times 
\G(\L')\cong \GG(\{z\}*\L')$ and $\{z\}*\L'\le \{z\}*\L^e=\G^e$, so the result holds in this
case as  well.  
\end{proof}

\end{document}